\newtheorem{theorem}{Theorem}{}
\newtheorem{lemma}[theorem]{Lemma}
\newtheorem{proposition}[theorem]{Proposition}
\theoremstyle{definition}
\newtheorem{definition}[theorem]{Definition}
\theoremstyle{remark}
\newtheorem{remark}[theorem]{Remark}
\numberwithin{equation}{section}
\begin{document}

\title[Blow-up rates for higher-order parabolic
equations]{Blow-up rates for higher-order a semilinear parabolic
equation with nonlinear memory term}

\author{Ahmad Z. FINO}
\address{Department of Mathematics, Faculty of Sciences, Lebanese University, P.O. Box 1352, Tripoli, Lebanon}
\email{ahmad.fino01@gmail.com; afino@ul.edu.lb}



\subjclass[2010]{Primary 35K25, 35K30, 35K55; Secondary 35B44, 26A33, 35A01}



\keywords{Blow-up rate; Higher-order; Parabolic equation; Riemann-Liouville fractional integrals and derivatives}

\begin{abstract}
In this paper, we establish blow-up rates for a higher-order semilinear parabolic
equation with nonlocal in time nonlinearity with no positive assumption on the solution. We also give Liouville-type theorem for higher-order semilinear parabolic equation with infinite memory nonlinear term which plays the main tools to prove our blow-up rate result. Finally, we study the well-posedness of mild solutions.
\end{abstract}

\maketitle

\section{Introduction}
\setcounter{equation}{0}
In this paper, we investigate the higher-order semilinear parabolic
equation with nonlocal in time nonlinearity
\begin{equation}\label{1}
\left\{
\begin{array}{ll}
\,\,\displaystyle{u_t+(-\Delta)^{m}
u=\int_0^t(t-s)^{-\gamma}|u|^{p}\,ds}&\displaystyle {x\in {\mathbb{R}^n},\;t>0,}\\
\\
\displaystyle{u(x,0)=u_0(x)}&\displaystyle{x\in {\mathbb{R}^n},}
\end{array}
\right. \end{equation} where $u_0\in C_0(\mathbb{R}^n),$ $n\geq1,$
$m\in\mathbb{N}^*$, $0<\gamma<1$, $p>1$. The space $C_0(\mathbb{R}^n)$ denotes the space of all continuous functions tending to zero at infinity.

Higher-order semilinear homogeneous equations arise, see e.g. the monograph \cite{Peletier}, in numerous problems in applications such as the higher-order diffusion, the phase transition, the flame propagation, and the thin film theory.

When $m=1$ and $\gamma\to1$, using the relation
$$\lim_{\gamma\to1}c_\gamma\,s_+^{-\gamma}=\delta_0(s)\quad\text{in distributional sense with}\,\,s_+^{-\gamma}:=\left\{\begin{array}{ll}
s^{-\gamma}&\,\,\text{if}\,\,s>0,\\
0&\,\,\text{if}\,\,s<0,
\end{array}
\right.
$$
with $c_\gamma=1/\Gamma(1-\gamma)$, and a suitable change of variable, problem \eqref{1} reduced to the following semilinear heat equation
\begin{equation}\label{heat}
\left\{
\begin{array}{ll}
\,\,\displaystyle{u_t-\Delta u=|u|^{p}}&\displaystyle {x\in {\mathbb{R}^n},\;t>0,}\\
\\
\displaystyle{u(x,0)=u_0(x)}&\displaystyle{x\in {\mathbb{R}^n}.}
\end{array}
\right. \end{equation}
The exponent $p_F=1+2/N$ is known as the 
critical Fujita exponent of \eqref{heat}. Namely, for $p<p_F$, Fujita \cite{Fuj} proved the nonexistence of nonnegative global-in-time solution for any nontrivial initial condition, and for $p>p_F$, global solutions do exist for any sufficiently small nonnegative initial data. The proof of a blow-up of all nonnegative solutions in the critical case $p=p_F$ was completed in \cite{11}. To understand the behavior of the solution near the finite blow-up time, the first step consists in deriving a bound for the blow-up rate. Giga and Kohn \cite{Giga} proved that if $(N-2)p<N+2$ and $u$ is a positive solution of \eqref{heat}, then there  exists a constant $C>0$ such that $u(x,t)\leq C\,(T^*-t)^{-1/(p-1)}$, for all $x\in\mathbb{R}^N$, where $T^*$ is the maximal time of existence.

When  $m=1$ and $0<\gamma<1$, Cazenave et al. \cite{CDW} studied problem \eqref{1} and proved that the critical exponent is 
$$p_*= \Big\{\frac{1}{\gamma},1+\frac{2(2-\gamma)}{(N-2+2\gamma)_+}\Big\},$$
where $(\cdotp)_+$ is the positive part. The study in \cite{CDW} reveals the surprising fact that for equation \eqref{1} the critical exponent in Fujita's sense $p_*$ is not the one predicted by scaling. Moreover, Fino and Kirane \cite{FinoK} derived the blow-up rate estimates for the parabolic equation (\ref{1}). Namely, they
proved that, if $u_0\in
C_0(\mathbb{R}^N)\cap L^2(\mathbb{R}^N),$ $u_0\geq0,$ $u_0\not\equiv0$ and if $u$ is the blowing-up
solution of (\ref{1}) at the finite time $T^*>0,$ then there are constants $c,C>0$ such that
$c(T^*-t)^{-\alpha_1}\leq \sup_{\mathbb{R}^N} u(\cdotp,t)\leq
C(T^*-t)^{-\alpha_1}$ for $1<p\leq1+
2(2-\gamma)/(N-2+2\gamma)_+$ or
$1<p<1/\gamma$ and all $t\in(0,T^*),$ where
$\alpha_1:=(2-\gamma)/(p-1)$.  They used a scaling argument to reduce the problems of blow-up rate to Fujita-type theorems (it is similar to blow-up analysis in elliptic problems to reduce the
problems of a priori bounds to Liouville-type theorems). As far as we know, this method was first applied to parabolic problems by Hu \cite{Hu}, and then was used in various parabolic equations and systems (see \cite{CFila,FQ}). We refer the reader to the excellent paper of Andreucci and Tedeev \cite{tedeev}  for the blow-up rate by an alternative method. 

When $m>1$ and $\gamma\to1$, Galaktionov and Pohozaev \cite{Galaktionov} have shown that $p=1+2m/N$ is the critical exponent of \eqref{1}. Moreover, Pan and Xing \cite{PX} studied this equation and its corresponding system, they derived the blow-up rates of the solution and proved that $\sup_{\mathbb{R}^N} |u(\cdotp,t)|\leq C(T^*-t)^{-1/(p-1)}$ for $1<p\leq1+2m/N$, $m\geq1$, under some condition on the initial data, where $T^*$ is the maximal time of existence.

When  $m>1$ and $0<\gamma<1$, problem \eqref{1} has been considered by Sun and Shi  \cite{SS}. They studied the global existence/nonexistence of solution. Namely, they proved that the critical exponent for \eqref{1} is 
$$p_*= \left\{\frac{1}{\gamma},1+\frac{2m(2-\gamma)}{(N-2m+2m\gamma)_+}\right\}.$$ Our main goal is to derive the blow-up rate estimates for the parabolic equation \eqref{1}. Our proof is similar to the ones in \cite{FinoK} and \cite{PX}. We also prove Liouville-type theorem for \eqref{1} with different memory nonlinear term (see \eqref{2+}), which plays a crucial role to obtain our blow-up rate result. The novelty is that no positive assumption on the solution is needed. Finally, in order to obtain a lower-bound for the blow-up rate, we complete the study of \cite{SS} by proving the local existence of mild solutions for \eqref{1}.

Let us first present our well-posedness result.
\begin{theorem}[Local existence]\label{T0+}
Given $u_0\in C_0(\mathbb{R}^n)$, $0<\gamma< 1$, $m\geq 1$, and $p>1$. There exist a maximal
time $T_{\max}>0$ and a unique mild solution $u\in
C([0,T_{\max}),C_0(\mathbb{R}^n))$ to the problem \eqref{1}. Moreover, either $T_{\max}=\infty$ or else $T_{\max}<\infty$ and in this case $\|u(t)\|_{L^\infty(\mathbb{R}^n)}\rightarrow\infty$ as $t\rightarrow T_{\max}.$ In addition, if $u_0\in C_0(\mathbb{R}^n)\cap L^r(\mathbb{R}^n)$, for $1\leq r<\infty,$ then $u\in
C([0,T_{\max}),C_0(\mathbb{R}^n)\cap L^r(\mathbb{R}^n)).$
\end{theorem}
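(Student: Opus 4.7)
The plan is to apply a contraction mapping argument to the Duhamel (mild) formulation of \eqref{1}. Write $S_m(t)$ for the semigroup generated by $-(-\Delta)^m$, whose Fourier symbol $e^{-t|\xi|^{2m}}$ furnishes a kernel in $L^1(\mathbb{R}^n)$ for every $t>0$; by Young's inequality $S_m(t)$ is bounded on $L^\infty$ uniformly on bounded time intervals, strongly continuous on $C_0(\mathbb{R}^n)$, and bounded on every $L^r$ for $1\le r\le\infty$. A mild solution is then a fixed point of
$$\Phi(u)(t) := S_m(t)u_0 + \int_0^t S_m(t-\sigma)\int_0^\sigma (\sigma-s)^{-\gamma}|u(s)|^p\,ds\,d\sigma.$$

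First I would set $M:=2\|u_0\|_{L^\infty}$ and work in the complete metric space
$$X_{T,M}:=\bigl\{u\in C([0,T],C_0(\mathbb{R}^n)):\|u\|_{L^\infty((0,T)\times\mathbb{R}^n)}\le M\bigr\}$$
with the sup norm. Using $\|S_m(t)\|_{L^\infty\to L^\infty}\le C$ and performing the inner integral (finite since $\gamma<1$) followed by the outer one, I get
$$\|\Phi(u)(t)\|_{L^\infty}\le \|u_0\|_{L^\infty}+\frac{C\,M^p\,T^{2-\gamma}}{(1-\gamma)(2-\gamma)},$$
and analogously, from the elementary bound $\bigl||u|^p-|v|^p\bigr|\le p(|u|^{p-1}+|v|^{p-1})|u-v|$,
$$\|\Phi(u)-\Phi(v)\|_{L^\infty}\le \frac{2pC\,M^{p-1}\,T^{2-\gamma}}{(1-\gamma)(2-\gamma)}\,\|u-v\|_{L^\infty}.$$
Choosing $T$ sufficiently small so that both constants are $\le 1/2$ makes $\Phi\colon X_{T,M}\to X_{T,M}$ a strict contraction; Banach's theorem then delivers a unique mild solution, and the strong continuity of $S_m$ together with dominated convergence ensures $u(t)\in C_0(\mathbb{R}^n)$ for every $t\in[0,T]$.

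The existence on a maximal interval $[0,T_{\max})$ and the blow-up alternative follow from the standard iterative continuation: since the local existence time depends only on $\|u(t_0)\|_{L^\infty}$, if $T_{\max}<\infty$ with $\limsup_{t\to T_{\max}}\|u(t)\|_{L^\infty}<\infty$ one could restart at $t_0$ close to $T_{\max}$ and extend past $T_{\max}$, contradicting maximality. Here one technicality is that the memory term at a restart time $t_0$ produces a forcing $\int_0^{t_0}(t-s)^{-\gamma}|u(s)|^p\,ds$ that is already known and merely adds to $S_m(t-t_0)u(t_0)$; this term is controlled on any compact sub-interval, so the contraction closes. Uniqueness on $[0,T_{\max})$ follows from the same Lipschitz estimate via a Gronwall-type iteration.

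For the final assertion, I would rerun the argument on the space $C([0,T],C_0(\mathbb{R}^n)\cap L^r(\mathbb{R}^n))$ with norm $\|u\|_{L^\infty((0,T);L^\infty)}+\|u\|_{L^\infty((0,T);L^r)}$. Using $\|S_m(t)\|_{L^r\to L^r}\le C$ and $\bigl\||u|^p\bigr\|_{L^r}\le\|u\|_{L^\infty}^{p-1}\|u\|_{L^r}$, together with the $L^\infty$ estimates just established, the same computation closes the contraction in the larger norm and yields a local solution in $C_0\cap L^r$. By uniqueness in the $L^\infty$-contraction this agrees with $u$, giving continuous $L^r$-values throughout $[0,T_{\max})$. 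The main obstacle I anticipate is simply bookkeeping the interaction of the memory kernel with the Duhamel convolution; once the $(\sigma-s)^{-\gamma}$ is integrated out into a factor $\sigma^{1-\gamma}/(1-\gamma)$, all remaining steps reduce to the standard semilinear contraction scheme.
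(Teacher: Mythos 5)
Your proposal is correct and follows essentially the same route as the paper: a Banach fixed-point argument for the Duhamel formulation in the ball $\{\|u\|_{\infty}\le 2\|u_0\|_{\infty}\}$ of $C([0,T],C_0(\mathbb{R}^n))$, with the contraction estimates obtained by integrating out the memory kernel, the same continuation/blow-up alternative via splitting the memory integral at the restart time into a known past part and an unknown future part, and the same rerun of the scheme in $C_0\cap L^r$ with $\||u|^p\|_{L^r}\le\|u\|_\infty^{p-1}\|u\|_{L^r}$ for the regularity claim. One small point shared with the paper's own write-up: for $m>1$ the kernel $b(\cdot,t)$ changes sign, so $\|S(t)\|_{L^\infty\to L^\infty}=\|b(\cdot,1)\|_{L^1}>1$; the invariance estimate should therefore use $\|S(t)u_0\|_\infty\le C_0\|u_0\|_\infty$ and a ball of radius $2C_0\|u_0\|_\infty$, a cosmetic recalibration that does not change the argument.
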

Next, our main result is the following theorem which present the blow-up rate for the blowing-up solutions to the parabolic problem (\ref{1}).
\begin{theorem}\label{Theorem1}
Let $u_0$ satisfies \eqref{13} below, and
$$
p\leq 1+
2m(2-\gamma)/(N-2m+2m\gamma)_+\qquad\hbox{or}\qquad p<1/\gamma.
$$
If $u$ is the blowing-up mild solution
of \eqref{1} in a finite time $T_{\max}:=T^*$, then
there exist two constants $c,C>0$ such that
\begin{equation}\label{BR+}
   c(T^*-t)^{-\alpha_1}\leq \sup_{\mathbb{R}^n} |u(\cdotp,t)|\leq C(T^*-t)^{-\alpha_1},\qquad t\in(0,T^*),
\end{equation}
where $\alpha_1:=(2-\gamma )/ (p-1)$.
\end{theorem}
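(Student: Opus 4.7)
The plan is to prove the two halves of \eqref{BR+} by different means: the upper estimate by a rescaling argument leveraging the Liouville-type theorem stated earlier for equation \eqref{2+}, and the lower estimate by the quantitative local existence provided by Theorem~\ref{T0+}. The two steps are essentially independent, and the assumed range of $p$ will enter only through the Liouville theorem.

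For the upper estimate I would argue by contradiction, in the spirit of Hu \cite{Hu} and \cite{FinoK,PX}. Suppose there exist sequences $x_k\in\mathbb{R}^n$ and $t_k\nearrow T^*$ with $|u(x_k,t_k)|(T^*-t_k)^{\alpha_1}\to\infty$. A doubling/selection argument applied to $|u|$ would refine this sequence so that, on a parabolic neighborhood of $(x_k,t_k)$ of size comparable to $\lambda_k:=|u(x_k,t_k)|^{-(p-1)/(2m(2-\gamma))}$, the solution is bounded by a multiple of $|u(x_k,t_k)|$. Because $\alpha_1(p-1)=2-\gamma$, the rescaling
$$
v_k(y,\tau):=\lambda_k^{2m\alpha_1}\,u\bigl(x_k+\lambda_k y,\,t_k+\lambda_k^{2m}\tau\bigr)
$$
then produces $|v_k(0,0)|=1$, a uniform local bound on $v_k$, and the rescaled equation
$$
\partial_\tau v_k+(-\Delta)^m v_k=\int_{-t_k/\lambda_k^{2m}}^{\tau}(\tau-s)^{-\gamma}|v_k|^p\,ds.
$$
Since $t_k\to T^*$ and $\lambda_k\to 0$, the lower limit tends to $-\infty$. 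Parabolic interior regularity for the $2m$-th order operator would then yield a locally convergent subsequence whose limit $v$ is a bounded solution of \eqref{2+} on $\mathbb{R}^n\times\mathbb{R}$ with $|v(0,0)|=1$; the Liouville theorem then forces $v\equiv 0$, a contradiction.

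For the lower estimate, Theorem~\ref{T0+} ensures that $\|u(t)\|_{L^\infty}\to\infty$ as $t\nearrow T^*$. Fixing $t_0\in(0,T^*)$ and writing the Duhamel formula restarted at $t_0$, so that the past contribution $\int_0^{t_0}(t-s)^{-\gamma}|u(s)|^p\,ds$ enters as a known forcing, a standard fixed-point estimate in $C_0(\mathbb{R}^n)$ should furnish a continuation time of order $\|u(t_0)\|_{L^\infty}^{-1/\alpha_1}$; the very same scaling used for the upper bound, in which $\lambda^{2m}\sim M^{-1/\alpha_1}$, confirms the exponent. Since the solution cannot be continued beyond $T^*$, one must have $T^*-t_0\leq C\,\|u(t_0)\|_{L^\infty}^{-1/\alpha_1}$, which rearranges to the left inequality in \eqref{BR+}.

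The main obstacle, I expect, will be the compactness step in the upper-bound argument: the non-local memory integral couples $v_k(\cdot,\tau)$ to its values on the large interval $[-t_k/\lambda_k^{2m},\tau]$, so the selection of $(x_k,t_k)$ must provide a genuine local $L^\infty$ bound on parabolic cylinders of the correct scale, and the passage to the limit in the memory integral must be justified (typically via dominated convergence together with the selected bound). This is precisely why a Liouville theorem for the $(-\infty,\tau)$-memory equation, rather than a classical Liouville theorem, is the right tool, and why the restriction on $p$ is exactly the one under which that Liouville theorem is available.
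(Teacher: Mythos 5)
Your overall blueprint agrees with the paper's: upper bound via rescaling toward the Liouville theorem for \eqref{2+}, lower bound via the quantitative local‑existence estimate. The lower‑bound half is essentially the paper's argument (the paper restarts the mild formulation at time $s$, uses the condition $\|u(s)\|_\infty + CT^{2-\gamma}\theta^p\le\theta$ with $\theta=2\|u(s)\|_\infty$, and lets $t\to T^*$), so that part is fine. But in the upper‑bound half there is a genuine gap that you yourself flag without resolving, and the resolution is precisely where your route and the paper's diverge.

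You propose a Poláčik--Quittner--Souplet style doubling/selection lemma, which furnishes an $L^\infty$ bound on a \emph{parabolic cylinder} around the selected point $(x_k,t_k)$ of radius $\sim\lambda_k$ (so after rescaling, on a fixed bounded cylinder). That local bound is \emph{not} enough here: the rescaled equation for $v_k$ involves the memory integral over $[-t_k/\lambda_k^{2m},\tau]$, whose lower limit escapes to $-\infty$, so to pass to the limit and obtain a bounded entire solution of \eqref{2+} you need a uniform $L^\infty$ bound on $v_k$ on the \emph{entire} rescaled past $\mathbb{R}^n\times(-t_k/\lambda_k^{2m},\cdot\,]$, not on a fixed cylinder. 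Dominated convergence, as you invoke, still requires a dominating bound over the unbounded time interval, and the local cylinder estimate does not provide one. The paper avoids this by a different selection mechanism: it sets $M(t):=\sup_{\mathbb{R}^n\times(0,t]}|u|$ (a running, global supremum), picks $t_0^+$ as the first time $M$ doubles, and scales with $\lambda(t_0)$; the uniform bound $|\varphi^{\lambda_n}|\le 4A$ then holds on all of $\mathbb{R}^n\times I_n(t_n^+)$, i.e.\ over the full rescaled past, which is exactly what the nonlocal term needs. The price of this global selection is that the contradiction statement is $\lambda^{-2m}(t_0)(t_0^+-t_0)\le D$, not directly the desired pointwise bound, so the paper must finish with Hu's geometric‑series iteration $t_1=t_0^+,\ t_2=t_1^+,\dots$ to convert it into $M(t_0)\le C(T^*-t_0)^{-\alpha_1}$; your formulation, if the compactness could be salvaged, would bypass that last iteration.

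Two smaller points worth noting: the compactness in the paper is obtained from maximal $L^q$ regularity (Dong--Kim) combined with interior Schauder estimates for higher‑order parabolic systems (Boccia), yielding convergence in $C^{2m+\mu,1+\mu/2m}_{\mathrm{loc}}$; you should cite concrete regularity input of this kind rather than generic ``parabolic interior regularity,'' since for $m>1$ the semigroup kernel is sign‑changing and classical Nash--Moser tools do not apply. Also, the assumption \eqref{13} ($u_0\in L^1\cap C_0$ with positive mass) is used to ensure the extra $L^1$‑regularity of the mild solution, via the last clause of Theorem~\ref{T0+}, which the paper needs in order to interpret the rescaled solutions weakly before bootstrapping their smoothness; your proposal does not mention where $u_0\in L^1$ enters.
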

Throughout this paper, positive constants will be denoted by $C$ and will change from line to line. 

The remainder of this paper is organized as follows: Section \ref{section2} concerns preliminaries. In Section \ref{section3}, we prove the local existence of the mild solution (Theorem \ref{T0+}) of  \eqref{1}. We devote to the proof of the main result (Theorem \ref{Theorem1}) in Section \ref{section4}. 

\section{Preliminaries}\label{section2}
\setcounter{equation}{0}
In this section, we present some definitions and results that will be used hereafter.\\
Let we start by giving the solution of the following homogenous equation
\begin{equation}\label{HS}
\left\{
\begin{array}{ll}
\,\,\displaystyle{u_t+(-\Delta)^{m}
	u=0}&\displaystyle {x\in {\mathbb{R}^n},\;t>0,}\\
\\
\displaystyle{u(x,0)=u_0(x)}&\displaystyle{x\in {\mathbb{R}^n}.}
\end{array}
\right. \end{equation}
Let $u_0\in X=:C_0(\mathbb{R}^n)$, and $A=-(-\Delta)^{m}$. Using \cite[Theorem~3.7, ~p.~217]{Pazy}, $A$ is the infinitesimal generator of an analytic semigroup $S(t):X\to X$,  $t\geq0$. Therefore, by \cite[Theorem~1.3, ~p.~102]{Pazy} and \cite[Corollary~1.5, ~p.~104]{Pazy}, the initial value problem \eqref{HS} has a unique solution $u(t)=S(t)u_0$, $t\geq0$, which is continuously differentiable on $[0,\infty[$. Moreover, the operator $S(t)$ can be presented as follows (see \cite{Galaktionov})
$$\begin{array}{lrcl}
S(t):& X & \longrightarrow &X \\
& \varphi & \longmapsto & S(t)\varphi=b(\cdotp,t)\ast \varphi,
\end{array}$$
where $b(\cdotp,t)$ denotes the kernel of the operator $S(t)$ (the fundamental solution of the parabolic operator $\partial_t+(-\Delta)^{m}$),  which is presented by
$$
 b(t,x)=\mathcal{F}^{-1}(e^{-|w|^{2m}t})= (2\pi)^{-N}\int_{\mathbb{R}^n}\exp^{-|w|^{2m}t-iw\cdotp x}\,dw.
$$
Furthermore, by \cite[Theorem~3.3]{Cui}, $S(t)$ satisfies the following $L^{p}$-$L^{q}$ estimate
\begin{eqnarray}\label{LpLq}
\|S(t)\varphi \|_q&\leq& Ct^{-\frac{n}{2m}(\frac{1}{p}-\frac{1}{q})}\|\varphi\|_p, 
\end{eqnarray}
for all $t>0$, $\varphi\in L^p(\mathbb{R}^n)$, $1\leq p\leq q\leq\infty$, for some positive constant $C=C(m,n,p,q)$. The kernel $b(\cdotp,t)$ changes sign, when $m>1$, and is oscillatory as $|x|\to\infty$, and the associated semigroup $S(t)$ is not order-preserving. So, there is no comparison principle for \eqref{HS}.\\

Next, we present the tools concerning the fractional integrals and
fractional derivatives.
\begin{definition}(Absolutely continuous functions)\\
A function $f:[a,b]\rightarrow\mathbb{R}$, $-\infty<a<b<\infty$, is absolutely continuous if and only if there exists a Lebesgue summable function $\varphi\in L^1(a,b)$ such that 
$$f(t)=f(a)+\int_{a}^t\varphi(s)\,ds.$$ 
The space of these functions is denoted by $AC[a,b]$. Moreover, we define
$$AC^2[a,b]:=\left\{f:[a,b]\rightarrow\mathbb{R}\;\text{such that}\;f'\in
AC[a,b]\right\}.$$
\end{definition}

\begin{definition}(Riemann-Liouville fractional integrals)\cite[Chapter~1]{SKM}\\
Let $f\in L^1(a,b)$, $-\infty<a<b<\infty$. The Riemann-Liouville left- and right-sided fractional integrals of order $\alpha\in(0,1)$ are, respectively, defined by
\begin{equation}\label{I1}
I^\alpha_{a|t}f(t):=\frac{1}{\Gamma(\alpha)}\int_{a}^t(t-s)^{-(1-\alpha)}f(s)\,ds, \quad t>a,
\end{equation}
and
\begin{equation}\label{I2}
I^\alpha_{t|b}f(t):=\frac{1}{\Gamma(\alpha)}\int_t^{b}(s-t)^{-(1-\alpha)}f(s)\,ds, \quad t<b,
\end{equation}
where $\Gamma$ is the Euler gamma function.
\end{definition}

\begin{definition}(Riemann-Liouville fractional derivatives)\cite[Chapter~1]{SKM}\\
Let $f\in AC[a,b]$, $-\infty<a<b<\infty$. The Riemann-Liouville left- and right-sided fractional derivatives of order $\alpha\in(0,1)$ are, respectively, defined by
\begin{equation}\label{}
D^\alpha_{a|t}f(t):=\frac{d}{dt}I^{1-\alpha}_{a|t}f(t)=\frac{1}{\Gamma(1-\alpha)}\frac{d}{dt}\int_{a}^t(t-s)^{-\alpha}f(s)\,ds, \quad t>a,
\end{equation}
and
\begin{equation}\label{}
D^\alpha_{t|b}f(t):=-\frac{d}{dt}I^{1-\alpha}_{t|b}f(t)=-\frac{1}{\Gamma(1-\alpha)}\frac{d}{dt}\int_t^{b}(s-t)^{-\alpha}f(s)\,ds, \quad t<b.
\end{equation}
\end{definition}

\begin{proposition}(Integration by parts formula)\cite[(2.64)~p.46]{SKM}\\
Let $\alpha\in(0,1)$ and $-\infty<a<b<\infty$. The fractional integration by parts formula
\begin{equation}\label{IP}
\int_{a}^{b}f(t)D^\alpha_{a|t}g(t)\,dt \;=\; \int_{a}^{b} 
g(t)D^\alpha_{t|b}f(t)\,dt,
\end{equation}
is valid for every $f\in I^\alpha_{t|b}(L^p(a,b))$, $g\in I^\alpha_{a|t}(L^q(a,b))$ such that $\frac{1}{p}+\frac{1}{q}\leq 1+\alpha$, $p,q>1$, where
$$I^\alpha_{a|t}(L^q(0,T)):=\left\{f= I^\alpha_{a|t}h,\,\, h\in L^q(a,b)\right\},$$
and
$$I^\alpha_{t|b}(L^p(a,b)):=\left\{f= I^\alpha_{t|b}h,\,\, h\in L^p(a,b)\right\}.$$
\end{proposition}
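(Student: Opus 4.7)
The plan is to reduce the identity to Fubini's theorem on the triangle $\{a<t<s<b\}$ by first removing the fractional derivatives through the representation of the spaces $I^\alpha_{a|t}(L^q(a,b))$ and $I^\alpha_{t|b}(L^p(a,b))$.

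By hypothesis I may write $g = I^\alpha_{a|t} h$ with $h \in L^q(a,b)$ and $f = I^\alpha_{t|b} k$ with $k \in L^p(a,b)$. The semigroup property $I^{1-\alpha}\circ I^\alpha = I^1$ together with the absolute continuity of $I^1_{a|t} h(t) = \int_a^t h(s)\,ds$ yields the inversion identities
\[
D^\alpha_{a|t} g = h, \qquad D^\alpha_{t|b} f = k,
\]
almost everywhere on $(a,b)$, and similarly on the right. Substituting these into the proposed identity reduces it to
\[
\int_a^b f(t)\, h(t)\, dt \;=\; \int_a^b g(t)\, k(t)\, dt.
\]

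Expanding $f=I^\alpha_{t|b}k$ on the left and applying Fubini's theorem over $\{(s,t):\,a<t<s<b\}$ gives
\[
\int_a^b f(t)h(t)\,dt = \frac{1}{\Gamma(\alpha)} \int_a^b h(t) \int_t^b (s-t)^{\alpha-1} k(s)\, ds\, dt = \frac{1}{\Gamma(\alpha)} \int_a^b k(s) \int_a^s (s-t)^{\alpha-1} h(t)\, dt\, ds,
\]
and the last expression equals $\int_a^b k(s) g(s)\,ds$, which is the desired right-hand side.

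The main obstacle is justifying Fubini's theorem, that is, the absolute integrability
\[
\int_a^b |k(s)| \bigl(I^\alpha_{a|t}|h|\bigr)(s)\, ds < \infty.
\]
This is precisely where the hypothesis $\tfrac{1}{p}+\tfrac{1}{q} \leq 1+\alpha$ is used: rewriting it as $\tfrac{1}{p'} \geq \tfrac{1}{q}-\alpha$, the Hardy--Littlewood--Sobolev mapping property on a bounded interval shows $I^\alpha_{a|t} : L^q(a,b) \to L^{p'}(a,b)$ is bounded for $p,q>1$. Hölder's inequality then yields
\[
\int_a^b |k(s)|\bigl(I^\alpha_{a|t}|h|\bigr)(s)\,ds \leq \|k\|_{L^p}\,\bigl\|I^\alpha_{a|t}|h|\bigr\|_{L^{p'}} \leq C\,\|k\|_{L^p}\|h\|_{L^q} < \infty,
\]
closing the argument.
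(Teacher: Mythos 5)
The paper does not prove this proposition; it is quoted directly from \cite[(2.64), p.~46]{SKM} and invoked as a known fact. There is therefore no ``paper's proof'' to compare against, but your argument is correct and is in fact the standard textbook derivation of the identity: express $f=I^\alpha_{t|b}k$ and $g=I^\alpha_{a|t}h$, collapse the derivatives via $D^\alpha_{a|t} I^\alpha_{a|t}=\mathrm{id}$ (a.e.), reduce the claim to a Fubini interchange over the triangle $a<t<s<b$, and use the $L^q\to L^{p'}$ boundedness of $I^\alpha$ on a bounded interval under the exponent condition $\tfrac1p+\tfrac1q\le 1+\alpha$ to justify the absolute integrability. One small point worth spelling out: when $\tfrac1q-\alpha\le 0$ the Hardy--Littlewood--Sobolev index $r$ with $\tfrac1r=\tfrac1q-\alpha$ is not defined, and in that regime the boundedness $I^\alpha_{a|t}\colon L^q(a,b)\to L^{p'}(a,b)$ is instead obtained from H\"older's inequality applied to the kernel $(t-s)^{\alpha-1}\in L^{q'}(a,t)$ (when $\alpha q>1$), or, at the borderline $\alpha q=1$, from the embedding $L^{1/\alpha}(a,b)\hookrightarrow L^{q_0}(a,b)$ for $q_0<1/\alpha$ followed by the subcritical HLS estimate; since $p>1$ forces $p'<\infty$, this always suffices. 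With that caveat made explicit, the proof is complete.
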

\begin{remark}
A simple sufficient condition for functions $f$ and $g$ to satisfy (\ref{IP}) is that $f,g\in C[a,b],$ such that
$D^\alpha_{t|b}f(t),D^\alpha_{a|t}g(t)$ exist at every point $t\in[a,b]$ and are continuous.
\end{remark}

\begin{proposition}\cite[Chapter~1]{SKM}\\
For $0<\alpha<1$, $-\infty<a<b<\infty$, we have the following identities
\begin{equation}\label{I3}
    D^\alpha_{a|t}I^\alpha_{a|t}f(t)=f(t),\,\hbox{a.e. $t\in(a,b)$}, \quad\hbox{for all}\,f\in L^r(a,b), 1\leq r\leq\infty,
\end{equation}
and
\begin{equation}\label{I4}
   -D.D^\alpha_{t|b}f=D^{1+\alpha}_{t|b}f,\quad\hbox{for all}\,f\in AC^2[a,b],
\end{equation}
where $D:=\frac{d}{dt}$.
\end{proposition}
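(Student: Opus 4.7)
The plan is to reduce both identities to the semigroup property of Riemann--Liouville integrals combined with elementary differentiation of absolutely continuous functions.

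\textbf{Identity \eqref{I3}.} First I would establish (or invoke from \cite{SKM}) the composition rule
\begin{equation*}
I^\alpha_{a|t}\,I^\beta_{a|t}f(t) \;=\; I^{\alpha+\beta}_{a|t}f(t),\qquad \alpha,\beta>0,\ f\in L^r(a,b),
\end{equation*}
whose proof is a Fubini interchange together with the Beta-function identity $\int_s^t(t-\tau)^{\alpha-1}(\tau-s)^{\beta-1}\,d\tau = B(\alpha,\beta)(t-s)^{\alpha+\beta-1}$. Specializing to $\beta=1-\alpha$ gives $I^{1-\alpha}_{a|t}I^\alpha_{a|t}f(t) = I^1_{a|t}f(t) = \int_a^t f(s)\,ds$. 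Unwinding the definition of $D^\alpha_{a|t}$ yields
\begin{equation*}
D^\alpha_{a|t}I^\alpha_{a|t}f(t) \;=\; \frac{d}{dt}I^{1-\alpha}_{a|t}I^\alpha_{a|t}f(t) \;=\; \frac{d}{dt}\int_a^t f(s)\,ds \;=\; f(t),
\end{equation*}
the last equality holding for a.e.\ $t\in(a,b)$ by the Lebesgue differentiation theorem, which applies since $f\in L^1(a,b)$ (recall $L^r(a,b)\subset L^1(a,b)$ on a bounded interval).

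\textbf{Identity \eqref{I4}.} Here the approach is simply to unwind definitions. The right-sided Riemann--Liouville derivative of order $\beta\in(1,2)$ is defined by $D^\beta_{t|b}f(t) = \frac{d^2}{dt^2}I^{2-\beta}_{t|b}f(t)$, and with $\beta=1+\alpha$ one has $2-\beta=1-\alpha$, so
\begin{equation*}
D^{1+\alpha}_{t|b}f(t) \;=\; \frac{d^2}{dt^2}I^{1-\alpha}_{t|b}f(t).
\end{equation*}
On the other hand, by definition of $D^\alpha_{t|b}$,
\begin{equation*}
-D\,D^\alpha_{t|b}f(t) \;=\; -\frac{d}{dt}\Bigl(-\frac{d}{dt}I^{1-\alpha}_{t|b}f(t)\Bigr) \;=\; \frac{d^2}{dt^2}I^{1-\alpha}_{t|b}f(t),
\end{equation*}
which coincides with the previous expression.

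The only nontrivial point, and the place I expect the hypothesis $f\in AC^2[a,b]$ to be used, is justifying that $I^{1-\alpha}_{t|b}f$ is twice differentiable (in the classical sense) so that the two $\frac{d}{dt}$'s can legitimately be carried out and the second derivative identified with $D^{1+\alpha}_{t|b}f$. Writing $f(t)=f(a)+\int_a^t f'(s)\,ds$ with $f'\in AC[a,b]$ and integrating by parts inside $I^{1-\alpha}_{t|b}$ transfers the derivatives onto $f$; the resulting expression is a convolution of $(s-t)^{-\alpha}$ with the absolutely continuous function $f'$, which differentiates a.e.\ and gives the required regularity. This verification, together with the Fubini/Beta computation used for the semigroup identity in part~(1), constitutes the main technical content of the proof.
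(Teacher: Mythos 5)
The paper states this proposition with a citation to Samko--Kilbas--Marichev and gives no proof of its own, so there is nothing internal to compare against; your argument is the standard one from that reference. Identity \eqref{I3} via the semigroup law $I^\alpha_{a|t}I^{1-\alpha}_{a|t}=I^1_{a|t}$ followed by Lebesgue differentiation is exactly the classical proof, and \eqref{I4} is, as you observe, essentially definitional (both sides unwind to $\frac{d^2}{dt^2}I^{1-\alpha}_{t|b}f$, matching the sign convention $D^\alpha_{t|b}=-\frac{d}{dt}I^{1-\alpha}_{t|b}$), with the $AC^2$ hypothesis supplying, through the integration-by-parts manipulation you sketch, the twice-differentiability needed to make the two successive $\frac{d}{dt}$'s legitimate.
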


Given $T>0$, let us define the functions $w_1$ and $w_2$ by
\begin{equation}\label{w1}
\displaystyle w_1(t)=\left(1-t/T\right)^\sigma,\quad\hbox{for all}\,\,\,0\leq t\leq T,
\end{equation}
and
\begin{equation}\label{w2}
\displaystyle w_2(t)=\left(1+t/T\right)^\sigma,\quad\hbox{for all}\,\,\,-T\leq t\leq 0,
\end{equation}
where $\sigma\gg1$ is bigg enough. Later on, we need the following properties concerning the functions $w_i$, $i=1,2$.
\begin{lemma}\cite[(2.45), p. 40]{SKM}\label{L1}\\
Let $T>0$, $0<\alpha<1$. For all $t\in[0,T]$, we have
\begin{equation}\label{P1}
D_{t|T}^\alpha
w_1(t)=\frac{\Gamma(\sigma+1)}{
\Gamma(\sigma+1-\alpha)}T^{-\alpha}(1-t/T)^{\sigma-\alpha},
\end{equation}
and
\begin{equation}\label{P3}
D_{t|T}^{1+\alpha}
w_1(t)=\frac{\Gamma(\sigma+1)}{
\Gamma(\sigma-\alpha)}T^{-(1+\alpha)}(1-t/T)^{\sigma-\alpha-1}.
\end{equation}
\end{lemma}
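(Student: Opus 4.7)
The plan is to compute $D^\alpha_{t|T}w_1$ directly from the definition and then obtain $D^{1+\alpha}_{t|T}w_1$ by applying the identity \eqref{I4} from the previous proposition. Since $w_1(t)=T^{-\sigma}(T-t)^\sigma$ is $C^\infty$ on $[0,T)$ and vanishes to high order at $T$ (choosing $\sigma$ large), the sufficient conditions of the remark following the integration-by-parts formula are met, so all operations below are justified classically.

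The first step is to evaluate the right-sided Riemann--Liouville fractional integral
\begin{equation*}
I^{1-\alpha}_{t|T}w_1(t)=\frac{T^{-\sigma}}{\Gamma(1-\alpha)}\int_t^T(s-t)^{-\alpha}(T-s)^\sigma\,ds.
\end{equation*}
Substituting $s=t+r(T-t)$ with $r\in[0,1]$ reduces the integral to a Beta function:
\begin{equation*}
I^{1-\alpha}_{t|T}w_1(t)=\frac{T^{-\sigma}(T-t)^{\sigma+1-\alpha}}{\Gamma(1-\alpha)}\int_0^1 r^{-\alpha}(1-r)^\sigma\,dr=T^{-\sigma}\frac{\Gamma(\sigma+1)}{\Gamma(\sigma+2-\alpha)}(T-t)^{\sigma+1-\alpha},
\end{equation*}
using $B(1-\alpha,\sigma+1)=\Gamma(1-\alpha)\Gamma(\sigma+1)/\Gamma(\sigma+2-\alpha)$. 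Differentiating and prepending the minus sign in the definition of $D^\alpha_{t|T}$ gives
\begin{equation*}
D^\alpha_{t|T}w_1(t)=-\frac{d}{dt}I^{1-\alpha}_{t|T}w_1(t)=T^{-\sigma}\frac{\Gamma(\sigma+1)}{\Gamma(\sigma+1-\alpha)}(T-t)^{\sigma-\alpha},
\end{equation*}
which is exactly \eqref{P1} after factoring $T^{-\alpha}$ out of $(T-t)^{\sigma-\alpha}$.

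For \eqref{P3}, I would invoke \eqref{I4}, which reads $D^{1+\alpha}_{t|T}f=-D\,D^\alpha_{t|T}f$, valid since $w_1\in AC^2[0,T]$. Differentiating the expression just obtained for $D^\alpha_{t|T}w_1$ once more and using $(\sigma+1-\alpha)\Gamma(\sigma+1-\alpha)=\Gamma(\sigma+2-\alpha)$ — or rather, after the first differentiation, $(\sigma-\alpha)\Gamma(\sigma-\alpha)=\Gamma(\sigma+1-\alpha)$ — yields the stated constant $\Gamma(\sigma+1)/\Gamma(\sigma-\alpha)$ and the prefactor $T^{-(1+\alpha)}$. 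There is no real obstacle here; the only point requiring minor care is bookkeeping of the Gamma constants and verifying that for $\sigma$ sufficiently large the endpoint $t=T$ causes no divergence in the intermediate integrals, which is automatic since $\sigma-\alpha-1>0$ when $\sigma\gg 1$.
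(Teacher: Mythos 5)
Your computation is correct, and since the paper simply cites \cite[(2.45), p.~40]{SKM} without supplying a proof, your direct derivation via the Beta-function reduction of $I^{1-\alpha}_{t|T}w_1$ followed by differentiation (and the identity $D^{1+\alpha}_{t|T}=-D\,D^{\alpha}_{t|T}$ for the second formula) is exactly the standard argument behind that reference. The Gamma-function bookkeeping $(\sigma+1-\alpha)\Gamma(\sigma+1-\alpha)=\Gamma(\sigma+2-\alpha)$ and $(\sigma-\alpha)\Gamma(\sigma-\alpha)=\Gamma(\sigma+1-\alpha)$ checks out, as does the factoring $T^{-\sigma}(T-t)^{\sigma-\alpha}=T^{-\alpha}(1-t/T)^{\sigma-\alpha}$.
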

\begin{lemma}\cite[(2.45), p. 40]{SKM}\label{L2}\\
Let $T>0$, $0<\alpha<1$. For all $t\in[-T,0]$, we have
\begin{equation}\label{P2}
D_{t|0}^\alpha
w_2(t)=\frac{\Gamma(\sigma+1)}{
\Gamma(\sigma+1-\alpha)}T^{-\alpha}(1+t/T)^{\sigma-\alpha},
\end{equation}
and
\begin{equation}\label{P4}
D_{t|0}^{1+\alpha}
w_2(t)=\frac{\Gamma(\sigma+1)}{
\Gamma(\sigma-\alpha)}T^{-(1+\alpha)}(1+t/T)^{\sigma-\alpha-1}.
\end{equation}
\end{lemma}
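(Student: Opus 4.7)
Just as Lemma \ref{L1} is the standard Riemann--Liouville power rule applied to $w_1(t)=T^{-\sigma}(T-t)^{\sigma}$, Lemma \ref{L2} is the mirror statement for $w_2(t)=T^{-\sigma}(t-(-T))^{\sigma}$, obtained from the analogous one-sided operator anchored at the left endpoint $-T$ of the interval $[-T,0]$. My plan is therefore to invoke, as in \cite[(2.45)]{SKM}, the elementary fractional power rule $D^{\alpha}(t-a)^{\sigma}=\frac{\Gamma(\sigma+1)}{\Gamma(\sigma+1-\alpha)}(t-a)^{\sigma-\alpha}$ with $a=-T$, and then differentiate once more to pass to order $1+\alpha$. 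In spirit, everything is a translation of the proof of Lemma \ref{L1} by the change of base point from $T$ to $-T$.

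\textbf{Step 1 --- establishing \eqref{P2}.} I would start from the defining fractional integral of order $1-\alpha$ of $w_2$ over $[-T,t]$: the substitution $s=-T+(t+T)u$, $u\in[0,1]$, turns it into a beta integral,
\begin{equation*}
\frac{1}{\Gamma(1-\alpha)}\int_{-T}^{t}(t-s)^{-\alpha}(s+T)^{\sigma}\,ds
\;=\;\frac{B(\sigma+1,1-\alpha)}{\Gamma(1-\alpha)}\,(t+T)^{\sigma+1-\alpha}
\;=\;\frac{\Gamma(\sigma+1)}{\Gamma(\sigma+2-\alpha)}(t+T)^{\sigma+1-\alpha}.
\end{equation*}
One differentiation in $t$, the gamma collapse $(\sigma+1-\alpha)/\Gamma(\sigma+2-\alpha)=1/\Gamma(\sigma+1-\alpha)$, the prefactor $T^{-\sigma}$ from $w_2=T^{-\sigma}(t+T)^{\sigma}$, and the rewriting $(t+T)^{\sigma-\alpha}=T^{\sigma-\alpha}(1+t/T)^{\sigma-\alpha}$ produce the closed form \eqref{P2}.

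\textbf{Step 2 --- establishing \eqref{P4}.} For the $(1+\alpha)$-derivative I would exploit the composition identity analogous to \eqref{I4}, namely that $D^{1+\alpha}w_2$ is (up to sign depending on whether the operator is left- or right-anchored) a further $t$-derivative of $D^{\alpha}w_2$. Differentiating the right-hand side of \eqref{P2} once more pulls out a factor $(\sigma-\alpha)/T$, and the collapse $(\sigma-\alpha)/\Gamma(\sigma+1-\alpha)=1/\Gamma(\sigma-\alpha)$ turns it into \eqref{P4}. Equivalently, one can just apply the power rule once more with $\beta=1+\alpha$ in place of $\alpha$ and obtain \eqref{P4} in one stroke.

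\textbf{What could go wrong.} There is no serious analytic obstacle --- everything rests on the beta function identity and the recursion $\Gamma(z+1)=z\Gamma(z)$. The only point requiring mild care is the legitimacy of differentiation under the integral sign for every $t\in[-T,0]$, which is precisely why the paper requires $\sigma\gg 1$ in \eqref{w2}: this secures $\sigma-\alpha-1>0$, so that the factor $(1+t/T)^{\sigma-\alpha-1}$ appearing in \eqref{P4} is still integrable up to the endpoint $t=-T$ and both identities hold classically (rather than only distributionally) on the whole interval $[-T,0]$, which is what the later test-function arguments need.
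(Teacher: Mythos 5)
Your computation is internally consistent, but it computes the wrong operator. You read $D^\alpha_{t|0}$ as ``the analogous one-sided operator anchored at the left endpoint $-T$'' and accordingly start from $\frac{1}{\Gamma(1-\alpha)}\int_{-T}^t(t-s)^{-\alpha}w_2(s)\,ds$, which is the paper's left-sided integral $I^{1-\alpha}_{-T|t}w_2$; differentiating gives the left-sided derivative $D^\alpha_{-T|t}w_2$. By the paper's own definition, however, $D^\alpha_{t|0}$ is the \emph{right-sided} derivative,
\begin{equation*}
D^\alpha_{t|0}w_2(t)=-\frac{1}{\Gamma(1-\alpha)}\frac{d}{dt}\int_{t}^{0}(s-t)^{-\alpha}w_2(s)\,ds,
\end{equation*}
with the integral over $[t,0]$, not $[-T,t]$. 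These are two distinct operators, and your beta substitution does not apply to the second one: since $w_2$ is a power of $(t+T)$ rather than of $(-t)$, setting $s=t+u(0-t)$ in $\int_t^0(s-t)^{-\alpha}(T+s)^\sigma\,ds$ does not yield a beta integral.

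The gap is not cosmetic. Because $w_2(0)=1\neq0$, the Leibniz boundary term in $-\frac{d}{dt}\int_t^0(s-t)^{-\alpha}w_2(s)\,ds$ produces a contribution $\sim(-t)^{-\alpha}/\Gamma(1-\alpha)$, so $D^\alpha_{t|0}w_2(t)\to\infty$ as $t\to0^-$, whereas the right-hand side of \eqref{P2} is finite at $t=0$; contrast this with Lemma~\ref{L1}, where $w_1(T)=0$ kills the analogous singularity at $t=T$ and the right-sided power rule really does apply. The same mismatch propagates to your Step 2: the paper's identity \eqref{I4} for the right-anchored operator carries a minus sign, $D^{1+\alpha}_{t|0}=-D\,D^\alpha_{t|0}$, while the unsigned differentiation you perform corresponds to the left-anchored rule $D^{1+\alpha}_{-T|t}=D\,D^\alpha_{-T|t}$. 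What you have actually proved is the power rule for $D^\alpha_{-T|t}w_2$ and $D^{1+\alpha}_{-T|t}w_2$ (a correct and standard computation), not the identities \eqref{P2}--\eqref{P4} as written for $D^\alpha_{t|0}$ and $D^{1+\alpha}_{t|0}$; had you carried out the right-sided integral $I^{1-\alpha}_{t|0}w_2$ that the stated operator actually requires, you would have found that no beta-type closed form is available and that the stated formula cannot hold. This discrepancy between the operator you integrate against and the operator in the statement should have been flagged explicitly rather than silently replaced.
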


\begin{lemma}\label{L4}
Let $T>0$, $0<\alpha<1$, $p>1$, we have
\begin{equation}\label{22}
\int_{0}^T (w_1(t))^{-1/(p-1)}|D_{t|T}^\alpha
w_1(t)|^{p/(p-1)}\,dt=C\,T^{1-\alpha p/(p-1)},
\end{equation}
and
\begin{equation}\label{23}
\int_0^T (w_1(t))^{-1/(p-1)}|D_{t|T}^{1+\alpha}
w_1(t)|^{p/(p-1)}\,dt=C\,T^{1-(1+\alpha) p/(p-1)},
\end{equation}
for some $C>0$.
\end{lemma}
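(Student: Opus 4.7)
The plan is to substitute the explicit formulas for $D_{t|T}^{\alpha}w_1$ and $D_{t|T}^{1+\alpha}w_1$ provided by Lemma \ref{L1}, combine the resulting powers of $(1-t/T)$, and reduce the integrals to an elementary one-dimensional integral via a linear change of variable.

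For the first identity, inserting \eqref{P1} and $w_1(t)=(1-t/T)^{\sigma}$ gives an integrand of the form
$$
C\,T^{-\alpha p/(p-1)}\,(1-t/T)^{-\sigma/(p-1)+(\sigma-\alpha)p/(p-1)}
=C\,T^{-\alpha p/(p-1)}\,(1-t/T)^{\sigma-\alpha p/(p-1)},
$$
since the exponents of $(1-t/T)$ combine as $-\sigma/(p-1)+(\sigma-\alpha)p/(p-1)=\sigma-\alpha p/(p-1)$. The substitution $s=1-t/T$ then yields $T\int_0^1 s^{\sigma-\alpha p/(p-1)}\,ds$, which gives the claimed scaling $T^{1-\alpha p/(p-1)}$. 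For the second identity, one proceeds identically with \eqref{P3} in place of \eqref{P1}; the exponent of $(1-t/T)$ becomes $\sigma-(1+\alpha)p/(p-1)$ and the prefactor becomes $T^{-(1+\alpha)p/(p-1)}$, producing the factor $T^{1-(1+\alpha)p/(p-1)}$ after the same substitution.

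The only subtlety is integrability near $t=T$: both remaining one-dimensional integrals are of the form $\int_0^1 s^{\beta}\,ds$, which is finite provided $\beta>-1$. This requires $\sigma>\alpha p/(p-1)-1$ for the first integral and $\sigma>(1+\alpha)p/(p-1)-1$ for the second. Both conditions are guaranteed by the assumption $\sigma\gg 1$ in the definition \eqref{w1}, so there is no real obstacle; the computation is essentially a clean exercise in algebra on exponents together with a single change of variable.
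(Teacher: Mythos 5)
Your proof is correct and follows essentially the same route as the paper's: substitute the explicit formulas \eqref{P1} and \eqref{P3} from Lemma \ref{L1}, combine the resulting powers of $(1-t/T)$, and rescale by a linear change of variable to an integral over $[0,1]$. Your added remark that convergence near $t=T$ requires $\sigma>(1+\alpha)p/(p-1)-1$ is a useful clarification that the paper leaves implicit in the standing assumption $\sigma\gg1$.
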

\proof The proof of this lemma can be found, e.g., in Furati \& Kirane \cite{Furati}. To make this paper self-contained, we will present the proof in details. First, we prove \eqref{22}, while the identity \eqref{23} can be done similarly. Using Lemma \ref{L1}, we have
\begin{eqnarray*}
\int_{0}^T (w_1(t))^{-1/(p-1)}|D_{t|T}^\alpha
w_1(t)|^{p/(p-1)}\,dt&=&C\,T^{-\alpha}\int_{0}^T (w_1(t))^{-1/(p-1)}(w_1(t))^{\frac{p(\sigma-\alpha)}{(p-1)\sigma}}\,dt\\
&=&C\,T^{-\alpha\frac{ p}{p-1}}\int_{0}^T(1-t/T)^{\sigma-\alpha\frac{ p}{p-1}}\,dt\\
&=&C\,T^{1-\alpha\frac{ p}{p-1}}\int_{0}^1(1-s)^{\sigma-\alpha\frac{ p}{p-1}}\,ds\\
&=&C\,T^{1-\alpha\frac{ p}{p-1}}.
\end{eqnarray*}
\hfill$\square$\\
Similarly, we have
\begin{lemma}\label{L5}
Let $T>0$, $0<\alpha<1$, $p>1$, we have
\begin{equation}
\int_{-T}^0 (w_2(t))^{-1/(p-1)}|D_{t|0}^\alpha
w_2(t)|^{p/(p-1)}\,dt=C\,T^{1-\alpha p/(p-1)},
\end{equation}
and
\begin{equation}
\int_{-T}^0 (w_2(t))^{-1/(p-1)}|D_{t|0}^{1+\alpha}
w_2(t)|^{p/(p-1)}\,dt=C\,T^{1-(1+\alpha) p/(p-1)},
\end{equation}
for some $C>0$.
\end{lemma}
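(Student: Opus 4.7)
The plan is to mimic the proof of Lemma \ref{L4} verbatim, with $w_1$ replaced by $w_2$, the interval $[0,T]$ replaced by $[-T,0]$, and the factor $(1-t/T)$ replaced by $(1+t/T)$, invoking Lemma \ref{L2} in place of Lemma \ref{L1}. Since the author has already indicated ``Similarly, we have,'' the computation is essentially a symbolic transcription; still, the two identities in Lemma \ref{L5} are verified independently, starting with the one involving $D^\alpha_{t|0}$.

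First, apply Lemma \ref{L2} to replace $D^\alpha_{t|0}w_2(t)$ by $C\,T^{-\alpha}(1+t/T)^{\sigma-\alpha}$. Since $w_2(t)=(1+t/T)^\sigma$, write $(1+t/T)^{\sigma-\alpha}=(w_2(t))^{(\sigma-\alpha)/\sigma}$, so that raising to the power $p/(p-1)$ and multiplying by $(w_2(t))^{-1/(p-1)}$ the integrand collapses to $C\,T^{-\alpha p/(p-1)}(1+t/T)^{\sigma-\alpha p/(p-1)}$, exactly as in the calculation displayed for Lemma \ref{L4}. Then change variables $s=1+t/T$, which maps $[-T,0]$ onto $[0,1]$ with $dt=T\,ds$, obtaining
\begin{equation*}
\int_{-T}^0 (w_2(t))^{-1/(p-1)}|D_{t|0}^\alpha w_2(t)|^{p/(p-1)}\,dt
=C\,T^{1-\alpha p/(p-1)}\int_0^1 s^{\sigma-\alpha p/(p-1)}\,ds.
\end{equation*}
The remaining integral is a finite positive constant provided $\sigma-\alpha p/(p-1)>-1$, which is guaranteed by the standing assumption $\sigma\gg 1$ in the definitions \eqref{w1}--\eqref{w2}.

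For the second identity, the argument is identical after using \eqref{P4} from Lemma \ref{L2}: the factor $(1+t/T)^{\sigma-\alpha-1}$ arises in place of $(1+t/T)^{\sigma-\alpha}$, one power of $T^{-1}$ is gained, and after the same manipulation and the same change of variables one ends up with $C\,T^{1-(1+\alpha)p/(p-1)}\int_0^1 s^{\sigma-(1+\alpha)p/(p-1)}\,ds$, which is again a finite constant for $\sigma$ sufficiently large.

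There is no real obstacle here: the symmetry between the left fractional derivative on $[0,T]$ and the right fractional derivative on $[-T,0]$ (reflected in the matching formulas of Lemmas \ref{L1} and \ref{L2}) makes the computation purely mechanical. The only point deserving a sentence of justification is the lower bound on $\sigma$ needed for integrability near $s=0$, which the hypothesis $\sigma\gg 1$ supplies.
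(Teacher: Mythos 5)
Your proof is correct and matches exactly what the paper intends by the phrase ``Similarly, we have'': transcribe the proof of Lemma \ref{L4} using Lemma \ref{L2} in place of Lemma \ref{L1}, with the change of variables $s=1+t/T$ instead of $s=1-t/T$. The note about integrability requiring $\sigma$ large enough is a fair observation, matching the standing assumption $\sigma\gg1$ in the definitions of $w_1,w_2$.
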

On the other hand, 
\begin{lemma}\cite[Lemma~8.18]{Folland}($C^\infty$ Urysohn Lemma)\label{Urysohn}\\
If $K\subset\mathbb{R}^n$ is compact and $U$ is an open set containing $K$, there exists $f\in C^\infty_c(\mathbb{R}^n)$ such that $0\leq f\leq 1$, $f=1$ on $K$, and supp$f\subset U$.
\end{lemma}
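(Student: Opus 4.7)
The plan is to construct $f$ by mollifying the indicator function of a slight thickening of $K$. The key preliminary observation is that, since $K$ is compact and $U^{c}$ is closed and disjoint from $K$, the distance $d := \operatorname{dist}(K, U^{c})$ is strictly positive (if $U = \mathbb{R}^{n}$, I would simply pick any $d > 0$). Setting $\delta := d/3$ I would introduce the closed thickenings $K_\delta := \{ x \in \mathbb{R}^{n} : \operatorname{dist}(x, K) \leq \delta \}$ and $K_{2\delta}$, both compact by the Heine--Borel theorem, with $K$ contained in the interior of $K_\delta$ and $K_{2\delta} \subset U$.

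Next I would fix a standard nonnegative mollifier $\eta_{\epsilon} \in C_{c}^{\infty}(\mathbb{R}^{n})$ with $\operatorname{supp} \eta_{\epsilon} \subset \overline{B_{\epsilon}(0)}$ and $\int \eta_{\epsilon} = 1$, taking $\epsilon = \delta$, and define $f := \mathbf{1}_{K_\delta} \ast \eta_{\epsilon}$. Standard properties of convolution with a test function give $f \in C^{\infty}(\mathbb{R}^{n})$, while $\eta_{\epsilon} \geq 0$ and $0 \leq \mathbf{1}_{K_\delta} \leq 1$ yield $0 \leq f \leq 1$ pointwise.

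The two remaining properties are then purely geometric. If $x \in K$, every $y$ within distance $\epsilon$ of $x$ lies within distance $\delta$ of $K$, so $B_{\epsilon}(x) \subset K_\delta$ and hence $f(x) = \int \eta_{\epsilon} = 1$. Conversely, if $\operatorname{dist}(x, K) > 2\delta$, then $B_{\epsilon}(x)$ is disjoint from $K_\delta$, giving $f(x) = 0$. Therefore $\operatorname{supp} f \subset K_{2\delta} \subset U$, and compactness of $K_{2\delta}$ gives $f \in C_{c}^{\infty}(\mathbb{R}^{n})$.

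I do not expect any genuine obstacle; the scheme is entirely standard (this is essentially Folland's argument, cited in the statement). The only small subtleties worth stating explicitly are (i) the strict positivity of $\operatorname{dist}(K, U^{c})$, which follows from compactness of $K$ together with continuity of $x \mapsto \operatorname{dist}(x, U^{c})$, and (ii) the trivial edge case $U = \mathbb{R}^{n}$, where any positive choice of $d$ makes the inclusion $K_{2\delta} \subset U$ automatic.
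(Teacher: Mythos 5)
Your proof is correct and is essentially the standard mollification argument that Folland uses for Lemma~8.18; the paper does not reprove the result but simply cites Folland. All the details check out: with $\varepsilon=\delta=d/3$ the inclusion $\overline{B_\varepsilon(x)}\subset K_\delta$ for $x\in K$ gives $f\equiv 1$ on $K$, and $\operatorname{dist}(x,K)>2\delta$ forces $f(x)=0$, so $\operatorname{supp} f\subset K_{2\delta}\subset U$ with $K_{2\delta}$ compact.
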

Using Lemma \ref{Urysohn}, there exists a function $\phi\in C^\infty_c(\mathbb{R}^n)$ such that 
\begin{equation}\label{testfunction}
\phi(x)=\phi(|x|)=\left\{
\begin{array}{ll}
1&\hbox{if}\;\;|x|\leq1,\\
\\
0&\hbox{if}\;\;|x|\geq2.
\end{array}
\right.
\end{equation}
An explicit example of this function can be found in \cite[Chapter~1, p.40]{Yuta}.
\begin{lemma}\label{lemma4}
Let $m\geq1$, $\ell>2m$, and $\phi$ is defined in \eqref{testfunction}. Then, the following estimate holds:
$$|\Delta^m(\phi^\ell)| \leq C\,\phi^{\ell-2m},$$
for some $C=C(m,\ell) >0$.
\end{lemma}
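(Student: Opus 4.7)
The plan is to expand $\Delta^m(\phi^\ell)$ by iterated Leibniz/chain rule, organize the resulting terms by how many factors of $\phi$ they carry, and then use $0\le\phi\le 1$ together with the hypothesis $\ell>2m$ to absorb every term into $\phi^{\ell-2m}$.

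Concretely, I would argue by induction on $|\alpha|$ that for any multi-index $\alpha$ with $1\le|\alpha|\le 2m$,
\begin{equation*}
\partial^\alpha(\phi^\ell) \;=\; \sum_{k=1}^{|\alpha|} \phi^{\ell-k}\, P_{k,\alpha}(\phi),
\end{equation*}
where each $P_{k,\alpha}(\phi)$ is a finite sum of products $\partial^{\beta_1}\phi\cdots\partial^{\beta_k}\phi$ with $|\beta_i|\ge 1$ and $|\beta_1|+\cdots+|\beta_k|=|\alpha|$. The base case $|\alpha|=1$ is simply $\partial_j(\phi^\ell)=\ell\phi^{\ell-1}\partial_j\phi$; the induction step differentiates the above expansion via the standard Leibniz rule, noting that an additional derivative either drops the power of $\phi$ by one and produces a new $\partial\phi$ factor, or raises the order of an existing $\partial^{\beta_i}\phi$. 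Since $\Delta^m$ is a finite linear combination of operators $\partial^\alpha$ with $|\alpha|=2m$, the same shape of expansion holds for $\Delta^m(\phi^\ell)$.

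Once the expansion is in place, the rest is immediate: since $\phi\in C^\infty_c(\mathbb{R}^n)$, every derivative $\partial^{\beta}\phi$ with $|\beta|\ge 1$ is bounded on $\mathbb{R}^n$ by a constant depending only on $m$ and $\phi$, so $|P_{k,\alpha}(\phi)|\le C(m,\ell,\phi)$. Using $0\le\phi\le 1$ together with $\ell-k\ge\ell-2m>0$ for $1\le k\le 2m$, one writes $\phi^{\ell-k}=\phi^{\ell-2m}\phi^{2m-k}\le\phi^{\ell-2m}$ and sums the finitely many resulting terms to conclude $|\Delta^m(\phi^\ell)|\le C\phi^{\ell-2m}$.

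The only point that requires care is verifying that no term with $k=0$ (which would give $\phi^\ell$ and spoil the absorbing argument near the boundary of $\operatorname{supp}\phi$) appears in the expansion; this is automatic because $|\alpha|\ge 1$ forces at least one derivative to hit some factor of $\phi$, so $k\ge 1$ throughout. The hypothesis $\ell>2m$ is precisely what guarantees $\ell-k>0$ for every $k\le 2m$ and hence allows the pointwise comparison $\phi^{\ell-k}\le\phi^{\ell-2m}$; beyond this, there is no real obstacle, as the argument is a purely combinatorial chain-rule calculation combined with the bound $\phi\le 1$.
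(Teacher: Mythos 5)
Your proof is correct and follows essentially the same route as the paper: both expand $\partial^\alpha(\phi^\ell)$ into terms of the form $\phi^{\ell-k}$ times products of derivatives of $\phi$, bound those derivative products by constants, and use $0\le\phi\le1$ with $\ell-k\ge\ell-2m>0$ to absorb everything into $\phi^{\ell-2m}$. The only presentational difference is that you derive the expansion by induction on $|\alpha|$, while the paper simply quotes the Fa\`a di Bruno formula for derivatives of compositions with $h(z)=z^\ell$; the underlying combinatorial content is the same.
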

\begin{proof}
First, we recall the following formula of derivatives of composed functions for $|\alpha|\ge 1$:
$$ \partial_x^\alpha h\big(f(x)\big)= \sum_{k=1}^{|\alpha|}h^{(k)} \big(f(x)\big)\left(\sum_{\substack{\gamma_1+\cdots+\gamma_k \le \alpha\\ |\gamma_1|+\cdots+|\gamma_k|= |\alpha|,\, |\gamma_i|\ge 1}}\big(\partial_x^{\gamma_1} f(x)\big) \cdots \big(\partial_x^{\gamma_k} f(x)\big)\right), $$
where $h=h(z)$ and $h^{(k)}(z)=\frac{d^k h(z)}{dz^k}$. Applying this formula with $h(z)= z^\ell$ and $f(x)= \phi(x)$, $1\leq|x|\leq 2$, we obtain
$$
\big|\partial_x^\alpha  (\phi(x))^\ell\big|\le \sum_{k=1}^{|\alpha|}\ell(\ell-1)\dots(\ell-k+1) (\phi(x))^{\ell-k}\left(\sum_{\substack{\gamma_1+\cdots+\gamma_k \le \alpha\\ |\gamma_1|+\cdots+|\gamma_k|= |\alpha|,\, |\gamma_i|\ge 1}}\big|\partial_x^{\gamma_1} \phi(x)\big| \cdots \big|\partial_x^{\gamma_k} \phi(x)\big|\right) 
$$
Using $\phi\in C^\infty_c(\mathbb{R}^n)$, we have
$$\big|\partial_x^{\gamma_i} \phi(x)\big| \leq C_\alpha,\qquad \hbox{for all}\,1\leq i\leq k,\,1\leq|x|\leq 2,$$
for some constant $C_\alpha=C(\alpha)>0$, which implies,
$$
\big|\partial_x^\alpha  (\phi(x))^\ell\big|\le \tilde{C}_\alpha\sum_{k=1}^{|\alpha|}\ell(\ell-1)\dots(\ell-k+1) (\phi(x))^{\ell-k},
$$
for some constant $\tilde{C}_\alpha>0$. Therefore, as $\phi\leq1$ and $\ell-k\geq \ell-|\alpha|$ for all $1\leq k\leq|\alpha|$, we conclude that
$$
\big|\partial_x^\alpha  (\phi(x))^\ell\big|\le C_{\alpha,\ell} (\phi(x))^{\ell-|\alpha|},\qquad\hbox{for all}\,\,1\leq|x|\leq 2
$$
Finally, as
$$|\Delta^m(\phi^\ell)| \leq m\sum_{|\alpha|=m}\big|\partial_x^{2\alpha}  (\phi(x))^\ell\big|,$$
the proof is complete.
\end{proof}

\begin{lemma}\label{L3}
Let $m\geq1$, $R>0$, $\ell>2mp/(p-1)$, and $p>1$. Then, the following estimate holds
$$\int_{\mathbb{R}^n}(\phi_R(x))^{-\frac{1}{p-1}}\,\big|(-\Delta)^m\phi_R(x)\big|^{\frac{p}{p-1}}\, dx\leq C R^{-\frac{2mp}{p-1}+n},$$
for some $C>0$, where $\phi_R(x):= \phi^\ell({x}/{R})$ and $\phi$ is given in \eqref{testfunction}.
\end{lemma}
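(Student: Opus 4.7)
The strategy is pure scaling combined with the pointwise estimate from Lemma~\ref{lemma4}, which was tailored precisely for this computation.

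First I would perform the change of variables $y = x/R$, which gives $dx = R^n\,dy$ and, by the chain rule, $(-\Delta)^m\phi_R(x) = R^{-2m}(-\Delta)^m\phi^\ell(y)$. Since $(-\Delta)^m = (-1)^m\Delta^m$, taking absolute values removes the sign, so
\begin{equation*}
\int_{\mathbb{R}^n}\phi_R(x)^{-\frac{1}{p-1}}\big|(-\Delta)^m\phi_R(x)\big|^{\frac{p}{p-1}}\,dx = R^{n-\frac{2mp}{p-1}}\int_{\mathbb{R}^n}\phi^\ell(y)^{-\frac{1}{p-1}}\big|\Delta^m\phi^\ell(y)\big|^{\frac{p}{p-1}}\,dy.
\end{equation*}

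Next I would apply Lemma~\ref{lemma4}, which yields $|\Delta^m\phi^\ell(y)|\leq C\,\phi(y)^{\ell-2m}$ (this uses $\ell > 2m$, which follows from the hypothesis $\ell > 2mp/(p-1) > 2m$). Plugging this into the transformed integrand, the power of $\phi$ becomes
\begin{equation*}
-\frac{\ell}{p-1} + \frac{(\ell-2m)p}{p-1} = \ell - \frac{2mp}{p-1},
\end{equation*}
which is strictly positive by hypothesis. Therefore the integrand is bounded by $C\,\phi(y)^{\ell-2mp/(p-1)}$, which is well-defined everywhere (vanishing on $\{\phi=0\}$ so no singularity arises) and bounded by a constant on the ball $|y|\leq 2$ since $0\leq\phi\leq 1$. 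Combined with $\mathrm{supp}\,\phi\subset\{|y|\leq 2\}$, the remaining integral is finite, which gives the claimed bound $C R^{n - 2mp/(p-1)}$.

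The only subtle point, and the one worth stating carefully in the write-up, is the pointwise combination of the two factors $\phi^{-1/(p-1)}$ and $|\Delta^m\phi^\ell|^{p/(p-1)}$: individually the first blows up on $\{\phi=0\}$, but Lemma~\ref{lemma4} guarantees that the second vanishes there at a faster rate, and it is the hypothesis $\ell > 2mp/(p-1)$ that ensures the resulting product is integrable (indeed, uniformly bounded). No delicate estimate is needed beyond this bookkeeping; the whole proof is essentially two lines once Lemma~\ref{lemma4} is in hand.
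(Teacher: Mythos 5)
Your proposal is correct and takes the same approach as the paper: rescale $y = x/R$ to pull out the factor $R^{n - 2mp/(p-1)}$, then apply Lemma~\ref{lemma4} to bound the remaining integrand by $C\phi(y)^{\ell - 2mp/(p-1)}$, which is bounded on $|y|\le 2$ since $\ell > 2mp/(p-1)$. Your extra remark about the cancellation of the apparent singularity on $\{\phi = 0\}$ is a useful clarification but reflects the same underlying calculation as in the paper.
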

\begin{proof} Using the change of variables $\tilde{x}=x/R$, we have 
$$(-\Delta)^m\phi_R(x)=R^{-2m}(-\Delta)^m\phi(\tilde{x}).$$ 
Therefore, by Lemma \ref{lemma4}, we conclude that
$$\int_{\mathbb{R}^n}(\phi_R(x))^{-\frac{1}{p-1}}\,\big|(-\Delta)^m \phi_R(x)\big|^{\frac{p}{p-1}}\, dx\leq C R^{-\frac{2mp}{p-1}+n}\int_{|\tilde{x}|\leq 2}(\phi(\tilde{x}))^{\ell-2m\frac{p}{p-1}}\, d \tilde{x}\leq C R^{-\frac{2mp}{p-1}+n}.$$
\end{proof}


\section{Local existence}\label{section3}

This section is dedicated to proving the local existence and uniqueness of mild solutions to the problem \eqref{1}. Let us start by the
\begin{definition}[Mild solution]
Let $u_0\in C_0(\mathbb{R}^n)$, $0<\gamma< 1$, $m\geq 1$, $p>1$ and $T>0.$ We say that $u\in C([0,T],C_0(\mathbb{R}^n))$ is a mild solution of problem \eqref{1} if $u$ satisfies the following integral equation
\begin{equation}\label{IE}
    u(t)=S(t)u_0+C_\alpha \int_0^tS(t-s)I_{0|s}^\alpha(|u(s)|^p)\,ds,\quad t\in[0,T],
\end{equation}
where $\alpha:=1-\gamma\in(0,1)$ and $C_\alpha=\Gamma(\alpha)$.
\end{definition}
\noindent{\bf Proof of Theorem \ref{T0+}.} For arbitrary $T>0,$ let
\[E_T:=\left\{u\in C([0,T],C_0(\mathbb{R}^n));\;
\|u(t)\|_\infty\leq 2\|u_0\|_\infty, \,\text{for all}\,t\in[0,T]\right\},
\]
where
$\|\cdotp\|_\infty:=\|\cdotp\|_{L^\infty(\mathbb{R}^n)}$, and we equip $E_T$ with the following metric generated by the norm of $C([0,T],C_0(\mathbb{R}^n))$ 
$$d(u,v)=\max_{t\in[0,T]}\|u(t)-v(t)\|_\infty,\quad\text{for all}\,\,u,v\in E_T.$$
Since $C([0,T],C_0(\mathbb{R}^n))$ is a Banach space, $(E_T,d)$ is a complete metric space. Next, for
all $u\in E_T,$ we define
\[
\Psi(u)(t):=S(t)u_0+C_\alpha \int_0^tS(t-s)I_{0|s}^\alpha(|u(s)|^p)\,ds.
\]
We prove the local existence by the Banach fixed point
theorem.\\
\noindent$\bullet$ {\bf${\bf\Psi:E_T\rightarrow E_T}$:}$\;$Let $u\in E_T,$ using (\ref{LpLq}), we obtain
\begin{eqnarray*}
\|\Psi(u)(t)\|_\infty &\leq& \|u_0\|_\infty+  \int_0^t\int_0^s(s-\sigma)^{-\gamma}\|u(\sigma)\|^p_\infty\,d\sigma\,ds\\
   &=& \|u_0\|_\infty+   \int_0^t\int_\sigma^t(s-\sigma)^{-\gamma}\|u(\sigma)\|^p_\infty\,ds\,d\sigma \\
     &\leq&\|u_0\|_\infty+\frac{T^{2-\gamma}2^p\|u_0\|_{L^\infty}^{p-1}}
   {(1-\gamma)(2-\gamma)}\|u_0\|_\infty,
\end{eqnarray*}
for all $t\in[0,T]$. Now, if we choose $T$ small enough such that
\begin{equation}\label{conditionssurT+}
\frac{T^{2-\gamma}2^p\|u_0\|_{\infty}^{p-1}}{(1-\gamma)(2-\gamma)}\leq1,
\end{equation}
we conclude that $\|\Psi(u)(t)\|_\infty\leq
2\|u_0\|_{\infty}$, for all $t\in[0,T]$. Therefore, using the fact that $S(t): C_0(\mathbb{R}^n)  \longrightarrow C_0(\mathbb{R}^n)$, and the continuity of the semigroup $S(t)$, we get $\Psi(u)\in E_T.$\\

\noindent$\bullet$ {\bf$\Psi$ is a contraction:}$\;$ For $u,v\in E_T$, using again (\ref{LpLq}), we have
\begin{eqnarray*}
 \|\Psi(u)(t)-\Psi(v)(t)\|_\infty&\leq& \int_0^t\int_0^s(s-\sigma)^{-\gamma}
  \||u(\sigma)|^p-|v(\sigma)|^p\|_{\infty}\,d\sigma\,ds\\
   &=&   \int_0^t\int_\sigma^t(s-\sigma)^{-\gamma}\||u(\sigma)|^p-|v(\sigma)|^p\|_{\infty}\,ds\,d\sigma \\
   &\leq& \frac{C(p)2^p\|u_0\|_{\infty}^{p-1}T^{2-\gamma}}
   {(1-\gamma)(2-\gamma)}d(u,v) \\
   &\leq& \frac{1}{2}d(u,v),
\end{eqnarray*}
for all $t\in[0,T]$, thanks to the following inequality
\begin{equation}\label{estimationimp}
||u|^p-|v|^p|\leq C(p)|u-v|(|u|^{p-1}+|v|^{p-1});
\end{equation}
$T$ is chosen such that
\begin{equation}\label{esti5+}
\frac{T^{2-\gamma}2^p\|u_0\|_{\infty}^{p-1}\max(2C(p),1)}{(1-\gamma)(2-\gamma)}\leq1.
\end{equation}
Then, by the Banach fixed point theorem, see e.g. \cite[Theorem~1.1.1]{CH}, there exists a mild solution $u\in \Pi_T:=L^\infty((0,T),C_0(\mathbb{R}^N)),$
 to problem (\ref{1}).\\

\noindent$\bullet$ {\bf Uniqueness:}$\;$ If $u,v$ are two mild solutions in $E_T$ for some $T>0,$
using (\ref{LpLq}) and  (\ref{estimationimp}), we obtain
\begin{eqnarray*}
  \|u(t)-v(t)\|_{\infty}&\leq&C(p)2^p\|u_0\|_{\infty}^{p-1}  \int_0^t\int_0^s(s-\sigma)^{-\gamma}
  \|u(\sigma)-v(\sigma)\|_{\infty}\,d\sigma\,ds\\
   &=&C(p)2^p\|u_0\|_{\infty}^{p-1} 
   \int_0^t\int_\sigma^t(s-\sigma)^{-\gamma}\|u(\sigma)-
   v(\sigma)\|_{\infty}\,ds\,d\sigma \\
   &=&\frac{C(p)2^p\|u_0\|_{\infty}^{p-1}}
   {1-\gamma} \int_0^t(t-\sigma)^{1-\gamma}\|u(\sigma)-
   v(\sigma)\|_{\infty}\,d\sigma,
\end{eqnarray*}
for all $t\in[0,T]$. So the uniqueness follows from Gronwall's inequality (cf. \cite{CH}).

Next, using the uniqueness of solutions, we conclude the existence
of a maximal solution 
\[
u\in C([0,T_{\max}),C_0(\mathbb{R}^n)).
\]
where
\[
T_{\max}:=\sup\left\{T>0\;;\;\text{there exist a mild solution $u\in E_T$
to (\ref{1})}\right\}\leq+\infty.
\]
Moreover, if $0\leq t\leq t+\tau< T_{\max},$ using (\ref{IE}), we can write
\begin{eqnarray}\label{newIE+}
u(t+\tau)&=&S(\tau)u(t)+C_\alpha\int_0^\tau S(\tau-s)\int_0^s(s-\sigma)^{-\gamma}|u(t+\sigma)|^p\,d\sigma\,ds\nonumber\\
&{}&+\,C_\alpha\int_0^\tau S(\tau-s)\int_0^t(t+s-\sigma)^{-\gamma}|u(\sigma)|^p\,d\sigma\,ds.
\end{eqnarray}
To prove that $\|u(t)\|_\infty\rightarrow\infty$ as $t\rightarrow T_{\max},$
whenever $T_{\max}<\infty,$ we proceed by contradiction. Suppose that $u$ is a solution of
(\ref{IE}) on some interval $[0,T)$ with $\|u\|_{L^\infty((0,T)\times\mathbb{R}^n)}<\infty$ and $T_{\max}<\infty.$
Using the fact that the last term in $(\ref{newIE+})$ depends only on the values of $u$ in the interval
$(0,t)$ and using again a fixed-point argument, we conclude that $u$ can be extended to a solution
on some interval $[0,T')$ with $T'>T.$ If we repeat this iteration, we obtain a contradiction with
the fact that the maximal time $T_{\max}$ is finite.\\

\noindent$\bullet$ {\bf Regularity:}$\;$ If $u_0\in
L^r(\mathbb{R}^n)\cap C_0(\mathbb{R}^n),$ for $1\leq r<\infty,$ then by repeating the
fixed point argument in the metric space
\begin{eqnarray*}
  E_{T,r}&:=& \left\{u\in C([0,T],C_0(\mathbb{R}^n)
  \cap L^r(\mathbb{R}^n));\right.\\
 &{}&\qquad\qquad \left.\;\|u(t)\|_\infty\leq 2\|u_0\|_{L^\infty},\|u(t)\|_{r}\leq
   2\|u_0\|_{L^r}, \text{for all}\,\,t\in[0,T]\right\},
\end{eqnarray*}
equipped with
$$d_r(u,v)=\max_{t\in[0,T]}\left(\|u(t)-v(t)\|_\infty+\|u(t)-v(t)\|_r\right),\quad\text{for all}\,\,u,v\in E_{T,r},$$
instead of $(E_T,d)$, where $\|\cdotp\|_{r}:=\|\cdotp\|_{L^r(\mathbb{R}^n)},$
and by estimating $\|u^p\|_{L^r(\mathbb{R}^n)}$ by
$\|u\|^{p-1}_{L^\infty(\mathbb{R}^n)}\|u\|_{L^r(\mathbb{R}^n)}$ in
the contraction mapping argument, using (\ref{LpLq}), we obtain a
unique solution in $E_{T,r}$, and therefore we conclude that
\[
u\in
C([0,T_{\max}),C_0(\mathbb{R}^n)\cap L^r(\mathbb{R}^n)).
\]
\hfill$\square$

\begin{remark} If $T_{\max}=\infty$, the solution $u$ is said to be global in time, while $u$ is said to blow up in a
finite time when $T_{\max}<\infty,$ and in this case we have $\|u(t)\|_{L^\infty(\mathbb{R}^N)}\rightarrow\infty$ as $t\rightarrow T_{\max}.$
\end{remark}


\section{Blow-up Rate}\label{section4}
\setcounter{equation}{0}
 In this section, we prove the blow-up rate for the blowing-up solutions of problem \eqref{1}, namely Theorem \ref{Theorem1}. We take the solution of (\ref{1}) with an initial condition
satisfying
\begin{equation}\label{13}
    u_0\in L^1(\mathbb{R}^n)\cap C_0(\mathbb{R}^n),\quad \int_{\mathbb{R}^n}u_0(x)\,dx>0.
\end{equation}
The following Lemma will be used in the proof of Theorem
\ref{Theorem1}. The test function method (see
\cite{Baras,BarasPierre,KQ, PM1, Zhang} and
the references therein) is the key to prove this Lemma.
\begin{lemma}\label{lemma1+}
Let $v$ be a bounded classical solution of
\begin{equation}\label{2+}
v_t+(-\Delta)^m v=\int_{-\infty}^t(t-s)^{-\gamma}|v(s)|^p\,ds \quad \mbox{in}\;\mathbb{R}\times\mathbb{R}^n,
\end{equation}
$m\geq1$, $p>1$. Then $v\equiv0$ whenever
\begin{equation}
p\leq 1+
2m(2-\gamma)/(N-2m+2m\gamma)_+\qquad\hbox{or}\qquad p<1/\gamma.
\end{equation}
\end{lemma}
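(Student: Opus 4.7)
The plan is to apply the test function (nonlinear capacity) method in the spirit of \cite{Baras,BarasPierre}, adapted to the higher-order and nonlocal-in-time setting along the lines of \cite{FinoK} for $m=1$. Multiply \eqref{2+} by the separable test function
$$\Psi(t,x):=\phi_R(x)\,\theta_T(t),$$
where $\phi_R(x):=\phi^\ell(x/R)$ is the spatial cutoff from Lemma \ref{L3} (with $\ell>2mp/(p-1)$), and $\theta_T$ is a nonnegative cutoff on $[-T,T]$ vanishing at the endpoints. The key is to take $\theta_T$ proportional to $D^{1-\gamma}_{t|T}w_1$ on $[0,T]$ and to its symmetric counterpart built from $w_2$ on $[-T,0]$, so that the Fubini-type identity
$$\int_{\mathbb{R}}\theta_T(t)\bigl(I^{1-\gamma}_{-\infty|t}|v|^p\bigr)(t)\,dt=\int_{-\infty}^{T}|v(s)|^p\bigl(I^{1-\gamma}_{s|T}\theta_T\bigr)(s)\,ds$$
replaces the memory weight by the cleaner function $w_1$ (up to constants), while $\theta_T'=\pm D^{2-\gamma}_{t|T}w_1$ is exactly what is controlled by Lemma \ref{L4} and its symmetric version, Lemma \ref{L5}.

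After integrating by parts once in $t$ and $2m$ times in $x$, and applying the above identity to the right-hand side, the weak formulation becomes
$$-\int\!\!\int v\,\theta_T'\phi_R\,dx\,dt+\int\!\!\int v\,\theta_T\,(-\Delta)^m\phi_R\,dx\,dt=\Gamma(1-\gamma)\int_{-\infty}^{T}\!\!\int_{\mathbb{R}^n}|v(s,x)|^p\,w_1(s)\phi_R(x)\,ds\,dx+\mathcal{R}(T),$$
with a tail $\mathcal{R}(T)$ (the contribution of $s<-T$) bounded by $\|v\|_\infty^p$ times a decaying kernel, and absorbed in the $T\to\infty$ limit. Setting $J:=\int\!\!\int|v|^p w_1\phi_R\,ds\,dx$ and applying H\"older's inequality in the weighted measure $w_1\phi_R\,ds\,dx$ to each of the two linear terms on the left, Lemma \ref{L3} in space and Lemmas \ref{L4}-\ref{L5} in time deliver
$$J\le C\,J^{1/p}\Bigl[\bigl(T^{1-(2-\gamma)p/(p-1)}R^{n}\bigr)^{(p-1)/p}+\bigl(T^{1-(1-\gamma)p/(p-1)}R^{n-2mp/(p-1)}\bigr)^{(p-1)/p}\Bigr].$$

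The natural parabolic scaling $R=T^{1/(2m)}$ balances the two capacity contributions, and a direct exponent computation shows that both resulting $T$-exponents are strictly negative precisely when $p<1+2m(2-\gamma)/(N-2m+2m\gamma)_+$; alternatively, keeping $R$ fixed and letting $T\to\infty$, the second capacity already yields a negative $T$-exponent as soon as $p<1/\gamma$, the first being always negative. In either subcritical regime one thus obtains $J\to 0$; since $w_1(s)\phi_R(x)\to 1$ on every compact set of $\mathbb{R}^{n+1}$ as $T,R\to\infty$, Fatou forces $v\equiv 0$.

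The principal obstacle I expect is the borderline case $p=1+2m(2-\gamma)/(N-2m+2m\gamma)_+$, where the above inequality degenerates to the uniform bound $J\le C$. The standard remedy (cf.\ \cite{BarasPierre,KQ}) is to first deduce from this global bound that $|v|^p$ is integrable against the limiting weight on all of $\mathbb{R}^{n+1}$, and then to rerun H\"older on the shrinking annular support of $\theta_T'$ and $(-\Delta)^m\phi_R$, where the capacity integrals vanish as $T\to\infty$; this upgrades the bound to $J=0$. A secondary technical point, specific to the memory going back to $-\infty$, is the control of the tail $\mathcal{R}(T)$, which is handled via the boundedness of $v$ together with the pointwise decay of the tail kernel as $s\to-\infty$.
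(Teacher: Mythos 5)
Your overall plan --- pairing the spatial cutoff $\phi_R$ with $\theta_T=D^{1-\gamma}_{t|T}w_1$ on $[0,T]$ and its mirror built from $w_2$ on $[-T,0]$, and using fractional integration by parts together with Lemmas \ref{L4}--\ref{L5} and \ref{L3} to reach the capacity estimate --- is the paper's plan, and your exponent bookkeeping (the balance $R=T^{1/2m}$ giving the threshold $1+2m(2-\gamma)/(N-2m+2m\gamma)_+$, and fixed $R$ giving $p<1/\gamma$) is correct. The gap is in how you handle the infinite memory. You apply the Fubini identity across the whole lower limit $-\infty$ and produce a tail $\mathcal{R}(T)$ from $s<-T$, which you claim is bounded by $\|v\|_\infty^p$ times a decaying kernel. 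That is false: after Fubini the tail kernel is
\[
\int_{-\infty}^{-T}\int_{-T}^{T}(t-s)^{-\gamma}\,\theta_T(t)\,dt\,ds,
\]
and because $0<\gamma<1$ the inner integral $\int_{-\infty}^{-T}(t-s)^{-\gamma}\,ds=\int_{t+T}^{\infty}u^{-\gamma}\,du$ already diverges for every fixed $t\in[-T,T]$. So boundedness of $v$ alone gives no control of $\mathcal{R}(T)$; one would need a quantitative decay of $|v(s)|$ as $s\to-\infty$, which is not hypothesized in the lemma and would be a nontrivial extra step.

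The paper avoids the tail entirely by a simple observation you did not make: since $\varphi_1=D^\alpha_{t|T}(w_1\phi_R)\ge 0$ and $\varphi_2=D^\alpha_{t|0}(w_2\phi_R)\ge 0$ (by \eqref{P1}, \eqref{P2}), and the memory integrand $(t-s)^{-\gamma}|v(s)|^p$ is nonnegative, one may lower-bound the memory term \emph{before} passing to the fractional integration by parts: replace $\int_{-\infty}^{t}$ by the truncated $\int_{0}^{t}$ on $[0,T]$ and by $\int_{-T}^{t}$ on $[-T,0]$, i.e.\ by $\Gamma(\alpha)\,I^\alpha_{0|t}(|v|^p)$ and $\Gamma(\alpha)\,I^\alpha_{-T|t}(|v|^p)$. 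The identities \eqref{IP}--\eqref{I3} then apply to these truncated integrals on finite intervals, no tail ever appears, and the argument closes exactly as in your sketch (H\"older, Lemmas \ref{L4}--\ref{L5} and \ref{L3}, $R=T^{1/2m}$ for $p<p_\gamma$, a secondary scaling parameter for $p=p_\gamma$, and fixed $R$ for $p<1/\gamma$). If you insert that truncation step, your argument becomes a correct rendering of the paper's proof; without it, the tail estimate fails.
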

\begin{proof}
Suppose that $v$ is a bounded classical solution to \eqref{2+} on $\mathbb{R}^n\times\mathbb{R}$. Then 
\begin{equation}\label{5}
v_t+(-\Delta)^m v=\int_{-\infty}^t(t-s)^{-\gamma}|v(s)|^p\,ds \quad \text{in}\,[0,\infty)\times\mathbb{R}^n,
\end{equation}
and
\begin{equation}\label{9}
v_t+(-\Delta)^m v=\int_{-\infty}^t(t-s)^{-\gamma}|v(s)|^p\,ds \quad\text{in}\,(-\infty,0]\times\mathbb{R}^n.
\end{equation}
Let $w_i$, $i=1,2$, and $\phi$ be the functions defined, respectively, in \eqref{w1}-\eqref{w2} and \eqref{testfunction}. For $T,R\gg1$, let us define our test functions as follows:
$$ \varphi_1(t,x)=D^\alpha_{t|T}\left(\tilde{\varphi}_1(t,x)\right),\qquad (t,x)\in[0,T]\times\mathbb{R}^n,$$
and
$$ \varphi_2(t,x)=D^\alpha_{t|0}\left(\tilde{\varphi}_2(t,x)\right),\qquad (t,x)\in[-T,0]\times\mathbb{R}^n,$$
 where $\alpha=1-\gamma$, $\tilde{\varphi}_i(t,x)=w_i(t)\phi_R(x)$, $i=1,2$, and $\phi_R(x)=\phi^\ell(x/R)$, $\ell>2mp/(p-1)$.\\
Multiplying \eqref{5} by $\varphi_1(t,x)$ (resp.  \eqref{9} by $\varphi_2(t,x)$) and integrating over $[0,T]\times\mathbb{R}^n$ (resp. over $[-T,0]\times\mathbb{R}^n$), we get
\begin{eqnarray*}
\int_0^T\int_{\mathbb{R}^n}\int_{-\infty}^t(t-s)^{-\gamma}|v(s)|^p\,ds\varphi_1(t,x) \,dx\,dt&=&\int_0^T\int_{\mathbb{R}^n}v(t,x)(-\Delta)^{m}\varphi_1(t,x)\,dx\,dt\\
&{}&+\int_0^T\int_{\mathbb{R}^n}v_t(t,x)\varphi_1(t,x)\,dx\,dt,
\end{eqnarray*}
and
\begin{eqnarray*}
\int_{-T}^0\int_{\mathbb{R}^n}\int_{-\infty}^t(t-s)^{-\gamma}|v(s)|^p\,ds\varphi_2(t,x) \,dx\,dt&=&\int_{-T}^0\int_{\mathbb{R}^n}v(t,x)(-\Delta)^{m}\varphi_2(t,x)\,dx\,dt\\
&{}&+\int_{-T}^0\int_{\mathbb{R}^n}v_t(t,x)\varphi_2(t,x)\,dx\,dt,
\end{eqnarray*}
where we have used the Green's identity several times. Using integration by parts, \eqref{P1}, and \eqref{P2}, we have
\begin{eqnarray*}
&{}&\int_0^T\int_{\mathbb{R}^n}\int_{-\infty}^t(t-s)^{-\gamma}|v(s)|^p\,ds\varphi_1(t,x) \,dx\,dt+C_{\alpha,\sigma}\,T^{-\alpha}\int_{\mathbb{R}^n}v(0,x)\phi_R(x)\,dx\\
&{}&=\int_0^T\int_{\mathbb{R}^n}v(t,x)(-\Delta)^{m}\varphi_1(t,x)\,dx\,dt-\int_0^T\int_{\mathbb{R}^n}v(t,x)\partial_t\varphi_1(t,x)\,dx\,dt,
\end{eqnarray*}
and
\begin{eqnarray*}
&{}&\int_{-T}^0\int_{\mathbb{R}^n}\int_{-\infty}^t(t-s)^{-\gamma}|v(s)|^p\,ds\varphi_2(t,x) \,dx\,dt-C_{\alpha,\sigma}\,T^{-\alpha}\int_{\mathbb{R}^n}v(0,x)\phi_R(x)\,dx\\
&{}&=\int_{-T}^0\int_{\mathbb{R}^n}v(t,x)(-\Delta)^{m}\varphi_2(t,x)\,dx\,dt-\int_{-T}^0\int_{\mathbb{R}^n}v(t,x)\partial_t\varphi_2(t,x)\,dx\,dt,
\end{eqnarray*}
where $C_{\alpha,\sigma}=\Gamma(\sigma+1)/\Gamma(\sigma+1-\alpha)$. Using again \eqref{P1}, and \eqref{P2}, we can see that $\varphi_i\geq0$, $i=1,2$, then
$$\int_0^T\int_{\mathbb{R}^n}\int_{0}^t(t-s)^{-\gamma}|v(s)|^p\,ds\varphi_1(t,x) \,dx\,dt\leq\int_0^T\int_{\mathbb{R}^n}\int_{-\infty}^t(t-s)^{-\gamma}|v(s)|^p\,ds\varphi_1(t,x) \,dx\,dt,$$
and
$$\int_{-T}^0\int_{\mathbb{R}^n}\int_{-T}^t(t-s)^{-\gamma}|v(s)|^p\,ds\varphi_2(t,x) \,dx\,dt\leq\int_{-T}^0\int_{\mathbb{R}^n}\int_{-\infty}^t(t-s)^{-\gamma}|v(s)|^p\,ds\varphi_2(t,x) \,dx\,dt,$$
which implies
\begin{eqnarray*}
&{}&\int_0^T\int_{\mathbb{R}^n}\int_{0}^t(t-s)^{-\gamma}|v(s)|^p\,ds\varphi_1(t,x) \,dx\,dt+C_{\alpha,\sigma}\,T^{-\alpha}\int_{\mathbb{R}^n}v(0,x)\phi_R(x)\,dx\\
&{}&\leq\int_0^T\int_{\mathbb{R}^n}v(t,x)(-\Delta)^{m}\varphi_1(t,x)\,dx\,dt-\int_0^T\int_{\mathbb{R}^n}v(t,x)\partial_t\varphi_1(t,x)\,dx\,dt,
\end{eqnarray*}
and
\begin{eqnarray*}
&{}&\int_{-T}^0\int_{\mathbb{R}^n}\int_{-T}^t(t-s)^{-\gamma}|v(s)|^p\,ds\varphi_2(t,x) \,dx\,dt-C_{\alpha,\sigma}\,T^{-\alpha}\int_{\mathbb{R}^n}v(0,x)\phi_R(x)\,dx\\
&{}&\leq\int_{-T}^0\int_{\mathbb{R}^n}v(t,x)(-\Delta)^{m}\varphi_2(t,x)\,dx\,dt-\int_{-T}^0\int_{\mathbb{R}^n}v(t,x)\partial_t\varphi_2(t,x)\,dx\,dt,
\end{eqnarray*}
that is
\begin{eqnarray}\label{10}
&{}&\Gamma(\alpha)\int_0^T\int_{\mathbb{R}^n}I^\alpha_{0|t}(|v|^p)D^\alpha_{t|T}\tilde{\varphi}_1(t,x) \,dx\,dt+C_{\alpha,\sigma}\,T^{-\alpha}\int_{\mathbb{R}^n}v(0,x)\phi_R(x)\,dx\nonumber\\
&{}&\leq\int_0^T\int_{\mathbb{R}^n}v(t,x)(-\Delta)^{m}\varphi_1(t,x)\,dx\,dt-\int_0^T\int_{\mathbb{R}^n}v(t,x)\partial_t\varphi_1(t,x)\,dx\,dt,
\end{eqnarray}
and
\begin{eqnarray}\label{11}
&{}&\Gamma(\alpha)\int_{-T}^0\int_{\mathbb{R}^n}I^\alpha_{-T|t}(|v|^p)D^\alpha_{t|0}\tilde{\varphi}_2(t,x) \,dx\,dt-C_{\alpha,\sigma}\,T^{-\alpha}\int_{\mathbb{R}^n}v(0,x)\phi_R(x)\,dx\nonumber\\
&{}&\leq\int_{-T}^0\int_{\mathbb{R}^n}v(t,x)(-\Delta)^{m}\varphi_2(t,x)\,dx\,dt-\int_{-T}^0\int_{\mathbb{R}^n}v(t,x)\partial_t\varphi_2(t,x)\,dx\,dt,
\end{eqnarray}
where $I^\alpha_{0|t}$ and $I^\alpha_{-T|t}$ are defined in \eqref{I1}. Adding \eqref{10} with \eqref{11}, and using \eqref{IP}-\eqref{I3} , we may obtain
\begin{eqnarray}\label{12}
&{}&\Gamma(\alpha)I(v)+\Gamma(\alpha)J(v)\nonumber\\
&{}&\leq \int_0^T\int_{\mathbb{R}^n}v(t,x)(-\Delta)^{m}\varphi_1(t,x)\,dx\,dt-\int_0^T\int_{\mathbb{R}^n}v(t,x)\partial_t\varphi_1(t,x)\,dx\,dt\nonumber\\
&{}&\quad+\,\int_{-T}^0\int_{\mathbb{R}^n}v(t,x)(-\Delta)^{m}\varphi_2(t,x)\,dx\,dt-\int_{-T}^0\int_{\mathbb{R}^n}v(t,x)\partial_t\varphi_2(t,x)\,dx\,dt,
\end{eqnarray}
where
$$I(v)=\int_0^T\int_{\mathbb{R}^n}|v(t,x)|^p\tilde{\varphi}_1(t,x) \,dx\,dt\quad\text{and}\quad J(v)=\int_{-T}^0\int_{\mathbb{R}^n}|v(t,x)|^p\tilde{\varphi}_2(t,x) \,dx\,dt.$$
Using \eqref{I4}, we get
\begin{eqnarray}\label{4}
\Gamma(\alpha)I(v)+\Gamma(\alpha)J(v)&\leq&\int_0^T\int_{\mathbb{R}^n}|v(t,x)|D^{\alpha}_{t|T}w_1(t)|\Delta^{m}\phi_R(x)|\,dx\,dt\nonumber\\
&{}&\,+\int_0^T\int_{\mathbb{R}^n}|v(t,x)|\phi_R(x)|D^{1+\alpha}_{t|T}w_1(t)|\,dx\,dt\nonumber\\
&{}&\,+\int_{-T}^0\int_{\mathbb{R}^n}|v(t,x)|D^{\alpha}_{t|0}w_2(t)|\Delta^{m}\phi_R(x)|\,dx\,dt\nonumber\\
&{}&\,+\int_{-T}^0\int_{\mathbb{R}^n}|v(t,x)|\phi_R(x)|D^{1+\alpha}_{t|0}w_2(t)|\,dx\,dt\nonumber\\
&=:&I_1+I_2+J_1+J_2.
\end{eqnarray}
We start to estimate $I_1$. Using H\"older's estimate, we have
\begin{eqnarray*}
I_1&=&\int_0^T\int_{|x|>R}|v(t,x)|\tilde{\varphi}^{1/p}\tilde{\varphi}^{-1/p}D^{\alpha}_{t|T}w_1(t)|\Delta^{m}\phi_R(x)|\,dx\,dt\\
&\leq&\left(\tilde{I}(v)\right)^{1/p}\left(\int_0^T\int_{\mathbb{R}^n}(\tilde{\varphi}(t,x))^{-\frac{1}{p-1}}(D^{\alpha}_{t|T}w_1(t))^{\frac{p}{p-1}}|\Delta^{m}\phi_R(x)|^{\frac{p}{p-1}}\,dx\,dt\right)^{\frac{p-1}{p}}\\
&=&\left(\tilde{I}(v)\right)^{\frac{1}{p}}\left(\int_0^T(w_1(t))^{-\frac{1}{p-1}}(D^{\alpha}_{t|T}w_1(t))^{\frac{p}{p-1}}\,dt\int_{\mathbb{R}^n}(\phi_R(x))^{-\frac{1}{p-1}}|\Delta^{m}\phi_R(x)|^{\frac{p}{p-1}}\,dx\right)^{\frac{p-1}{p}}
\end{eqnarray*}
where
$$\tilde{I}(v)=\int_0^T\int_{|x|>R}|v(t,x)|^p\tilde{\varphi}(t,x)\,dx\,dt.$$
Using Lemmas \ref{L4} and \ref{L3}, we obtain
\begin{equation}\label{2}
I_1\leq C\,\left(\tilde{I}(v)\right)^{\frac{1}{p}} T^{\frac{p-1}{p}-\alpha}\,R^{\frac{n(p-1)}{p}-2m}.
\end{equation}
Similar, we estimate $I_2$ as follows:
\begin{eqnarray*}
I_2&=&\int_0^T\int_{|x|\leq 2R}|v(t,x)|\tilde{\varphi}^{1/p}\tilde{\varphi}^{-1/p}\phi_R(x)|D^{1+\alpha}_{t|T}(w_1(t))|\,dx\,dt\\
&\leq&\left(I(v)\right)^{\frac{1}{p}}\left(\int_0^T\int_{|x|\leq 2R}(w_1(t))^{-\frac{1}{p-1}}|D^{1+\alpha}_{t|T}w_1(t)|^{\frac{p}{p-1}}\phi_R(x)\,dx\,dt\right)^{\frac{p-1}{p}}\\
&=&\left(I(v)\right)^{\frac{1}{p}}\left(\int_0^T(w_1(t))^{-\frac{1}{p-1}}|D^{1+\alpha}_{t|T}w_1(t)|^{\frac{p}{p-1}}\,dt\int_{|x|\leq 2R}\phi_R(x)\,dx\right)^{\frac{p-1}{p}}.
\end{eqnarray*}
By the change of variable: $\tilde{x}=x/R$, we have
$$\int_{|x|\leq 2R}\phi_R(x)\,dx=\int_{|\tilde{x}|\leq 2}\phi(\tilde{x})R^n\,d\tilde{x}=C\,R^n.$$
Therefore, using Lemma \ref{L4}, we conclude that
$$
I_2\leq C\,\left(I(v)\right)^{\frac{1}{p}} T^{\frac{p-1}{p}-1-\alpha}\,R^{\frac{n(p-1)}{p}}.
$$
By $\varepsilon$-Young's inequality 
$$
ab\leq\;\varepsilon a^p+C_\varepsilon\,b^{\frac{p}{p-1}}\qquad\text{where}\;\; a>0,b>0,\;\;p>1,$$
the following estimation holds
\begin{equation}\label{3}
I_2\leq \varepsilon\,I(v)+C\,T^{1-(1+\alpha)\frac{p}{p-1}}\,R^{n}.
\end{equation}
Similarly, using Lemma \ref{L5} instead of Lemma \ref{L4}, we get
\begin{equation}\label{14}
J_1\leq C\,\left(\tilde{J}(v)\right)^{\frac{1}{p}} T^{\frac{p-1}{p}-\alpha}\,R^{\frac{n(p-1)}{p}-2m},
\end{equation}
and
\begin{equation}\label{15}
J_2\leq \varepsilon\,J(v)+C\,T^{1-(1+\alpha)\frac{p}{p-1}}\,R^{n}.
\end{equation}
Insert \eqref{2}, \eqref{3}, \eqref{14} and \eqref{15} in \eqref{4}, and choose $\varepsilon<\Gamma(\alpha)$,  we get
\begin{eqnarray}\label{8}
I(v)+J(v)&\leq& C\,T^{1-(1+\alpha)\frac{p}{p-1}}\,R^{n}+ C\,\left(\tilde{I}(v)\right)^{\frac{1}{p}} T^{\frac{p-1}{p}-\alpha}\,R^{\frac{n(p-1)}{p}-2m}\nonumber\\
&{}&+\, C\,\left(\tilde{J}(v)\right)^{\frac{1}{p}} T^{\frac{p-1}{p}-\alpha}\,R^{\frac{n(p-1)}{p}-2m}.
\end{eqnarray}
 At this stage, we have to distinguish three cases:\\

\noindent $\bullet$ The case $p<p_\gamma$. Take $R=T^{\frac{1}{2m}}$, we obtain
$$I(v)\leq C\,T^{-\delta}+C\,\left(\tilde{I}(v)\right)^{\frac{1}{p}} T^{-\delta\frac{(p-1)}{p}}.$$
where $\delta=-1+(1+\alpha)\frac{p}{p-1}-\frac{n}{2m}$. Using the fact that $\tilde{I}(v)\leq I(v)$, $\varepsilon$-Young's inequality, we infer that
$$I(v)+J(v)\leq C\,T^{-\delta}+\frac{1}{2}I(v)+\frac{1}{2}J(v),$$
i.e.
\begin{equation}\label{6}
I(v)+J(v)\leq C\,T^{-\delta}.
\end{equation}
As $p<p_\gamma$ implies $\delta>0$, after passing to the limit as $T\rightarrow\infty$, using the monotone convergence theorem, the continuity of $v$ in time and space, we get 
$$\int_{-\infty}^\infty\int_{\mathbb{R}^n}|v(t,x)|^p \,dx\,dt=0.$$ 
Therefore $v=0$ in $\mathbb{R}\times\mathbb{R}^n$.\\
\noindent $\bullet$ The case $p=p_\gamma$. On the one hand, from \eqref{6}, we have
$$v\in L^p((0,\infty),L^p(\mathbb{R}^n))\quad\text{and}\quad v\in L^p((-\infty,0),L^p(\mathbb{R}^n))$$
which implies that
\begin{equation}\label{7}
\tilde{I}(v), \tilde{J}(v)\longrightarrow 0,\quad\hbox{as}\,\, T\rightarrow\infty.
\end{equation}
On the other hand, take $R=T^{\frac{1}{2m}}K^{-\frac{1}{2m}}$, where $1\leq K<T$ is large enough such that when $T\rightarrow\infty$ we don't have $K\rightarrow\infty$ at the same time. From \eqref{8} and $p=p_\gamma$, we obtain
$$
I(v)+J(v)\leq C\,K^{-\frac{n}{2m}}+ C\,\left(\tilde{I}(v)\right)^{\frac{1}{p}} \,K^{-\frac{n(p-1)}{2mp}+2}+ C\,\left(\tilde{J}(v)\right)^{\frac{1}{p}} \,K^{-\frac{n(p-1)}{2mp}+2}.
$$
Letting the limit as $T\rightarrow\infty$, using \eqref{7}, and the monotone convergence theorem, we get
$$
\int_{-\infty}^\infty\int_{\mathbb{R}^n}|v(t,x)|^p \,dx\,dt\leq C\,K^{-\frac{n}{2m}}.
$$
Taking the limit as $K\rightarrow\infty$, we conclude as above that $v=0$ in $\mathbb{R}\times\mathbb{R}^n$.\\
\noindent $\bullet$The case $p< 1 / \gamma$. In this case, we choose $R\in[1,T)$ large enough such that when $T\rightarrow\infty$ we don't have $R\rightarrow\infty$ at the same time. From \eqref{8}, and the fact that $\tilde{I}(v)\leq I(v)$, $\tilde{J}(v)\leq J(v)$, $\varepsilon$-Young's inequality, we infer that
$$I(v)+J(v)\leq C\,T^{1-(1+\alpha)\frac{p}{p-1}}\,R^{n}+ \frac{1}{2}I(v)+ \frac{1}{2}J(v)+C\, T^{1-\alpha\frac{p}{p-1}}\,R^{n-2m\frac{p}{p-1}},$$
i.e.
$$I(v)+J(v)\leq C\,T^{1-(1+\alpha)\frac{p}{p-1}}\,R^{n}+C\, T^{1-\alpha\frac{p}{p-1}}\,R^{n-2m\frac{p}{p-1}},$$
Letting the limit as $T\rightarrow\infty$, and the fact that $p< 1 / \gamma\Rightarrow 1-\alpha\frac{p}{p-1}<0$, we get
$$\int_{-\infty}^\infty\int_{\mathbb{R}^n}|v(t,x)|^p \,dx\,dt=0.$$ 
Therefore $v=0$ in $\mathbb{R}\times\mathbb{R}^n$.\\
This completes the proof.
\end{proof}

\noindent{\bf Proof of Theorem \ref{Theorem1}.} The proof is in two parts:\\

\noindent $\bullet$ \underline{The upper blow-up rate estimate}. Let
$$M(t):=\sup_{\mathbb{R}^n\times (0,t]}|u|,\qquad t\in(0,T^*).$$
Clearly, $M$ is positive, continuous, nondecreasing in $(0,T^*)$, and $\lim_{t\rightarrow T^*}M(t)=\infty$. Then for all
$t_0\in(0,T^*),$ we can define
$$t_0^+:=t^+(t_0):=\max\{t\in(t_0,T^*):\;M(t)=2M(t_0)\}.$$
Choose $A\geq1$ and let
\begin{equation}\label{Lamb+}
  \lambda(t_0):=\left(\frac{1}{2A}M(t_0)\right)^{-1/(2m\alpha_1)}.
\end{equation}
we claim that
\begin{equation}\label{Res+}
    \lambda^{-2m}(t_0)(t_0^+-t_0)\leq D,\qquad
    t_0\in\left(\frac{T^*}{2},T^*\right),
\end{equation}
where $D>0$ is a positive constant which does not depend on $t_0.$\\
We proceed by contradiction. If (\ref{Res+}) were false, then there
would exist a sequence $t_n\rightarrow T^*$ such that
$$\lambda_n^{-2m}(t_n^+-t_n)\longrightarrow\infty,$$
where $\lambda_n=\lambda(t_n)$ and $t_n^+=t^+(t_n).$ For each $t_n$
choose
\begin{equation}\label{Seq+}
(\hat{x}_n,\hat{t}_n)\in\mathbb{R}^n\times(0,t_n]\quad\hbox{such
that}\quad |u(\hat{x}_n,\hat{t}_n)|\geq\frac{1}{2}M(t_n).
\end{equation}
Obviously, $M(t_n)\rightarrow\infty;$ hence,
$\hat{t}_n\rightarrow T^*.$ Next, rescale the function $u$ as
\begin{equation}\label{fu+}
\varphi^{\lambda_n}(y,s):=\lambda_n^{2m\alpha_1}u(\lambda_n
y+\hat{x}_n, \lambda_n^{2m} s+\hat{t}_n),\qquad
(y,s)\in\mathbb{R}^n\times I_n(T^*),
\end{equation}
where $I_n(t):=(-\lambda_n^{-{2m}}\hat{t}_n,
\lambda_n^{-{2m}}(t-\hat{t}_n))$ for all $t>0.$ Then
$\varphi^{\lambda_n}$ is a mild solution of
\begin{equation}\label{Nsol+}
\varphi_s+(-\Delta)^m\varphi=\int_{-\lambda^{-2m}_n\hat{t}_n}^s(s-r)^{-\gamma}|\varphi(r)|^p\,dr\qquad
\hbox{in}\;\mathbb{R}^n\times I_n(T^*).
\end{equation}
On the other hand, $|\varphi^{\lambda_n}(0,0)|\geq A$, and
$$|\varphi^{\lambda_n}|\leq\lambda_n^{2m\alpha_1}M(t^+_n)=
\lambda_n^{2m\alpha_1}2M(t_n)=4A\qquad\hbox{in}\;\;\mathbb{R}^n\times
I_n(t_n^+),$$ thanks to \eqref{Lamb+} and the definition of $t_n^+$.

Moreover, as
$$\varphi^{\lambda_n}\in C([-\lambda_n^{-2}\hat{t}_n,T],C_0(\mathbb{R}^N)\cap L^1(\mathbb{R}^N))\quad\text{for all}\;T\in I_n(T^*),$$
so, as in Lemma \cite[Lemma~4.2]{FinoK}, $\varphi^{\lambda_n}$ is a weak solution of (\ref{Nsol+}).\\
By the maximal regularity theory \cite[Theorem~2]{Dongkim}, we have
\[
\varphi^{\lambda_n}\in W^{1,q}((-\lambda_n^{-2m}\hat{t}_n,T);L^q(\mathbb{R}^N))\cap L^q((-\lambda_n^{-2m}\hat{t}_n,T);W^{2m,q}(\mathbb{R}^N)),
\]
for any $q\in(1,\infty)$. Therefore, from the uniform interior Schauder's estimates (see \cite{Boccia}), the
$C^{2m+\mu,1+\mu/2m}_{loc}(\mathbb{R}^n\times\mathbb{R})$-norm
of $\varphi^{\lambda_n}$ is uniformly bounded, for some $\mu\in(0,1)$. Hence, we obtain a
subsequence converging in
$C^{2m+\mu,1+\mu/2m}_{loc}(\mathbb{R}^n\times \mathbb{R})$ to a solution $\varphi$ of
\[
\varphi_s+(-\Delta)^m\varphi=C_\alpha\,I^\alpha_{-\infty|s}(|\varphi|^p)\qquad\text{in}\;
\mathbb{R}^n\times(-\infty,+\infty),
\]
such that $|\varphi(0,0)|\geq A$ and $|\varphi|\leq 4A$ in $\mathbb{R}^n\times\mathbb{R}.$ Whereupon, 
using Lemma \ref{lemma1+}, we infer that $\varphi\equiv0$ in $\mathbb{R}^n\times(-\infty,+\infty).$ Contradiction
with the fact that $|\varphi(0,0)|\geq A\geq1.$ This proves (\ref{Res+}).\\
 Next we use an idea from Hu \cite{Hu}. From (\ref{Lamb+}) and
 (\ref{Res+}) it follows that
\[
(t_0^+-t_0)\leq
 D(2A)^{1/\alpha_1}M(t_0)^{-1/\alpha_1}\qquad\hbox{for
 any}\;t_0\in\left(\frac{T^*}{2},T^*\right).
 \]
 Fix $t_0\in\left(T^*/2,T^*\right)$ and denote
 $t_1=t_0^+,t_2=t_1^+,t_3=t_2^+,\dots$. Then
 \begin{eqnarray*}
   t_{j+1}-t_j&\leq&D(2A)^{1/\alpha_1}M(t_j)^{-1/\alpha_1}, \\
   M(t_{j+1})&=&2M(t_j),
 \end{eqnarray*}
 $j=0,1,2,\dots .$ Consequently,
 \begin{eqnarray*}
   T^*-t_0&=&\sum^{\infty}_{j=0}(t_{j+1}-t_j)\leq D(2A)^{1/\alpha_1}
   \sum^{\infty}_{j=0}M(t_j)^{-1/\alpha_1}\\
    &=&D(2A)^{1/\alpha_1}M(t_0)^{-1/\alpha_1}\sum^{\infty}_{j=0}2^{-j/\alpha_1}.
 \end{eqnarray*}
Finally, we conclude that
\[
|u(x,t_0)|\leq M(t_0)\leq C(T^*-t_0)^{-\alpha_1},\qquad\forall\;t_0\in(0,T^*)
\]
where
\[
C=2A\left(D\sum^{\infty}_{j=0}2^{-j/\alpha_1}\right)^{\alpha_1};
\]
so
\[
\sup_{\mathbb{R}^n}|u(\cdotp,t)|\leq C(T^*-t)^{-\alpha_1},\qquad\forall\;t\in(0,T^*).
\]

\noindent $\bullet$ \underline{The lower blow-up rate estimate}. If we repeat
the proof of the local existence of Theorem \ref{T0+}, by
taking $\|u\|_1\leq \theta$ instead of $\|u\|_1\leq
2\|u_0\|_\infty$ in the space $E_T$ for all positive constant
$\theta>0$ and all $0<t< T,$ then the condition (\ref{conditionssurT+}) of
$T$ will be:
\begin{equation}\label{esti6+}
\|u_0\|_\infty+C T^{2-\gamma}\theta^p\leq\theta,
\end{equation}
and then, like before, we infer that $\|u(t)\|_\infty\leq
\theta$ for (almost) all $0<t<T.$ Consequently, if $\|u_0\|_\infty+C
t^{2-\gamma}\theta^p\leq\theta,$ then $\|u(t)\|_\infty\leq\theta.$
Applying this to any point in the trajectory, we see that if $0\leq
s<t$ and
\begin{equation}\label{esti7+}
(t-s)^{2-\gamma}\leq\frac{\theta-\|u(s)\|_\infty}{C\theta^p},
\end{equation}
then $\|u(t)\|_\infty\leq\theta,$ for all $0<t<T.$\\
Moreover, if $0\leq s<T^*$ and $\|u(s)\|_\infty<\theta,$ then:
\begin{equation}\label{esti8+}
(T^*-s)^{2-\gamma}>\frac{\theta-\|u(s)\|_\infty}{C\theta^p}.
\end{equation}
Indeed, arguing by contradiction and assuming that for some
$\theta>\|u(s)\|_\infty$ and all $t\in(s,T^*)$ we have
\[
(t-s)^{2-\gamma}\leq\frac{\theta-\|u(s)\|_\infty}{C\theta^p}.
\]
Then, using (\ref{esti7+}), we infer that
$\|u(t)\|_\infty\leq\theta$ for all $t\in(s,T^*);$ this contradicts the fact that
$\|u(t)\|_\infty\rightarrow\infty$ as
$t\rightarrow T^*.$\\
Next, for example, by setting $\theta=2\|u(s)\|_\infty$ in
(\ref{esti8+}), we see that for $0<s<T^*$ we have:
\[
(T^*-s)^{2-\gamma}>C'\|u(s)\|_\infty^{1-p},
\]
and by the continuity of $u$ we get
\begin{equation}\label{esti9+}
c(T^*-s)^{-\alpha_1}<\sup_{x\in\mathbb{R}^n}|u(x,s)|,\qquad\forall\;s\in(0,T^*).
\end{equation}
\hfill$\square$

\subsection*{Acknowledgment}
The author is supported by the Lebanese University research program.

\bibliographystyle{amsplain}

\begin{thebibliography}{10}

\bibitem{tedeev} D. Andreucci, A. F. Tedeev, \textit{Universal bounds at the blow-up time for nonlinear parabolic equations},
Adv. Differential Equations \textbf{10} (2005), no. 1, 89--120.

\bibitem{Baras} P. Baras, R. Kersner, \textit{Local and global solvability of a class of semilinear parabolic equations.}
J. Differential Equations \textbf{68} (1987), no. 2, 238--252.

\bibitem{BarasPierre}P. Baras, M. Pierre, \textit{Crit\`ere d'existence de solutions positives
pour des \'equations semi-lin\'eaires non monotones}, Ann. Inst. H. Poincar\'e Anal. Non Lin\'eaire \textbf{2} (1985), 185--212.

\bibitem{Boccia}S. Boccia, \textit{Schauder estimates for solutions of high-order parabolic systems}, Methods
Appl. Anal. \textbf{20} (2013), no. 1, 47--67.


\bibitem {CH}  T. Cazenave, A. Haraux, \textit{Introduction aux probl\`emes
d'\'evolution semi-lin\'eaires}, Ellipses, Paris, (1990).

\bibitem{CDW} T. Cazenave, F. Dickstein, F. D. Weissler, \textit{An equation
whose Fujita critical exponent is not given by scaling}, Nonlinear
Analysis \textbf{68} (2008), 862--874.

\bibitem {CFila}  M. Chlebik, M. Fila, \textit{From critical exponents to
blow-up rates for parabolic problems},
Rend. Mat. Appl. (7) 19 \textbf{4} (1999), 449--470.

\bibitem {Cui}  S. Cui, \textit{Local and global existence of solutions to semilinear parabolic initial value problems}, Nonlinear Analysis 43 (2001), 293--323.


\bibitem{Dongkim} H. Dong, D. Kim, \textit{On the $L^p$-Solvability of higher order parabolic and elliptic systems with BMO coefficients}, Arch. Rational Mech. Anal. 199 (2011), 889--941.

\bibitem{FQ} M. Fila, P. Quittner, \textit{The Blow-Up Rate for a Semilinear Parabolic System}, J.
of Mathematical Analysis and Applications \textbf{238} (1999), 468--476.


 \bibitem{FinoK} {A. Z. Fino, M. Kirane}, \textit{Qualitative properties of solutions to a time-space fractional evolution equation}, J. Quarterly of Applied Mathematics \textbf{70}(2012), 133--157.
 
  \bibitem{Furati} {K. M. Furati, M. Kirane}, \textit{Necessary Conditions for the Existence of Global Solutions to Systems of Fractional Differential Equations}, Fractional Calculus and Applied Analysis 11(3) (2008), 281--298.
 
  \bibitem{Folland} {Gerald B. Folland}, \textit{Real analysis: modern techniques and their applications}, second ed., John Wiley \& Sons, 1999.
  
\bibitem{Fuj} H. Fujita, \textit{On the blowing up of solutions of the
problem for $u_t=\Delta u+u^{1+\alpha},$} J. Fac. Sci. Univ. Tokyo
\textbf{13} (1966), 109--124.

\bibitem{Galaktionov} V.A. Galaktionov, S.I. Pohozaev, \textit{Existence and blow-up for higher-order semilinear parabolic equations: Majorizing order-preserving operators},
Indiana Univ. Math. J. 51(6) (2002), 1321--1338.

\bibitem{Giga} Y. Giga, R. V. Kohn, \textit{Characterizing blow-up using similarity variables}, Indiana Univ.
Math. J. 36 (1987), 1--40.

\bibitem{11}  K. Hayakawa, \textit{ On nonexistence of global solutions of
some semilinear parabolic differential equations}, Proc. Japan Acad. 4 (1973), 503--505.

\bibitem{Hu} B. Hu, \textit{Remarks on the blow-up estimate for solutions of the heat equation
with a nonlinear boundary condition}, Differential Integral Equations
9 (1996), 891--901.

\bibitem{KQ} M. Kirane, M. Qafsaoui, \textit{Global nonexistence for the Cauchy problem of some nonlinear
Reaction-Diffusion systems}, J. Math. Analysis and Appl. \textbf{268} (2002), 217--243.


\bibitem{PM1} E. Mitidieri, S. I. Pohozaev,  \textit{A priori estimates and
blow-up of solutions to nonlinear partial differential equations and
inequalities}, Proc. Steklov. Inst. Math. \textbf{234} (2001), 1--383.

\bibitem{PX} Hongjing Pan, Ruixiang Xing, \textit{Blow-up rates for higher-order semilinear parabolic equations and
systems and some Fujita-type theorems}, J. Math Anal. Appl. \textbf{339} (2008), 248--258.
 
 \bibitem{Pazy} A. Pazy, \textit{Semigroups of Linear Operators and Applications to Partial Differential Equations},
Springer-Verlag, 1983.
 
\bibitem{Peletier} L. A. Peletier, W. C. Troy, \textit{Spatial Patterns: Higher Order Models in
Physics and Mechanics}, Progress in Nonlinear Differential Equations and their Applications, 45,
Birkh\"{a}user Boston Inc., Boston,MA, 2001.
 
\bibitem{SKM} S. G. Samko, A. A. Kilbas, O. I. Marichev, \textit{Fractional
integrals and derivatives}, Theory and Applications, Gordon and
Breach Science Publishers, 1987.


 \bibitem{SS} {F. Sun, P. Shi}, \textit{Global existence and non-existence for a higher-order parabolic
equation with time-fractional term}, J. Nonlinear analysis \textbf{75}(2012), 4145--4155.


\bibitem{Yuta} Y. Wakasugi,  \textit{On the diffusive structure for the damped wave equation with variable coefficients}, Doctoral thesis, Osaka University, 2014.

\bibitem{Zhang} Qi S. Zhang, \textit{A blow up result for a nonlinear wave equation with damping:
the critical case}, C. R. Acad. Sci. Paris, Vol. ${\bf 333}$
$(2001),$ no. $2,$ 109--114.
\end{thebibliography}

\end{document}